\begin{document}

\title{DC Decomposition of Nonconvex Polynomials\\ with Algebraic Techniques
}


\author{Amir Ali Ahmadi         \and
        Georgina Hall \thanks{The authors are partially supported by the Young Investigator Program Award of the AFOSR and the CAREER Award of the NSF.}
}


\institute{Amir Ali Ahmadi \at
             ORFE, Princeton University, Sherrerd Hall, Princeton, NJ 08540 \\
              \email{a\_a\_a@princeton.edu}           
          \and
         Georgina Hall \at
             ORFE, Princeton University, Sherrerd Hall, Princeton, NJ 08540\\
\email{gh4@princeton.edu}
}

\date{Received: date / Accepted: date}

\maketitle

\begin{abstract}
We consider the problem of decomposing a multivariate polynomial as the difference of two convex polynomials. We introduce algebraic techniques which reduce this task to linear, second order cone, and semidefinite programming. This allows us to optimize over subsets of valid difference of convex decompositions (dcds) and find ones that speed up the convex-concave procedure (CCP). We prove, however, that optimizing over the entire set of dcds is NP-hard.

\keywords{Difference of convex programming, conic relaxations, polynomial optimization, algebraic decomposition of polynomials}
%
\end{abstract}

\newenvironment{gh}{\color{black}}

\section{Introduction}\label{sec:intro}


A difference of convex (dc) program is an optimization problem of the form

\begin{equation}\label{eq:dcP}
\begin{aligned}
&\min f_0(x)\\
&\text{s.t. } f_i(x)\leq 0, i=1,\ldots,m,
\end{aligned}
\end{equation}
where $f_0,\ldots,f_m$ are difference of convex functions; i.e., 
\begin{equation} \label{eq:dceq}
\begin{aligned} 
&f_i(x)=g_i(x)-h_i(x),i=0,\ldots,m, \\
\end{aligned}
\end{equation} 
and $g_i:\mathbb{R}^n \rightarrow \mathbb{R},~h_i:\mathbb{R}^n \rightarrow \mathbb{R}$ are convex functions. The class of functions that can be written as a difference of convex functions is very broad containing for instance all functions that are twice continuously differentiable~\cite{Hartman}, \cite{hiriart}. Furthermore, any continuous function over a compact set is the uniform limit of a sequence of dc functions; see, e.g., reference \cite{Horst} where several properties of dc functions are discussed.


Optimization problems that appear in dc form arise in a wide range of applications. Representative examples from the literature include machine learning and statistics (e.g., kernel selection \cite{Argyriou}, feature selection in support vector machines \cite{le2008dc}, sparse principal component analysis \cite{lipp2014variations}, and reinforcement learning \cite{piot2014difference}), operations research (e.g., packing problems and production-transportation problems \cite{Tuy}), communications and networks \cite{pang},\cite{lou2015}, circuit design \cite{lipp2014variations}, finance and game theory \cite{gulpinar2010}, and computational chemistry \cite{floudas2013}. We also observe that dc programs can encode constraints of the type $x \in \{0,1\}$ by replacing them with the dc constraints $0 \leq x \leq 1, x-x^2 \leq 0$. This entails that any binary optimization problem can in theory be written as a dc program, but it also implies that dc problems are hard to solve in general.

As described in \cite{tao1997}, there are essentially two schools of thought when it comes to solving dc programs. The first approach is global and generally consists of rewriting the original problem as a concave minimization problem (i.e., minimizing a concave function over a convex set; see \cite{tuy1988}, \cite{tuy1987}) or as a reverse convex problem (i.e., a convex problem with a linear objective and one constraint of the type $h(x)\geq 0$ where $h$ is convex). We refer the reader to \cite{Tuy86} for an explanation on how one can convert a dc program to a reverse convex problem, and to \cite{HJ80b} for more general results on reverse convex programming. These problems are then solved using branch-and-bound or cutting plane techniques (see, e.g., \cite{Tuy} or \cite{Horst}). The goal of these approaches is to return global solutions but their main drawback is scalibility. The second approach by contrast aims for local solutions while still exploiting the dc structure of the problem by applying the tools of convex analysis to the two convex components of a dc decomposition. One such algorithm is the Difference of Convex Algorithm (DCA) introduced by Pham Dinh Tao in \cite{PDT76} and expanded on by Le Thi Hoai An and Pham Dinh Tao. This algorithm exploits the duality theory of dc programming~\cite{toland79} and is popular because of its ease of implementation, scalability, and ability to handle nonsmooth problems.



In the case where the functions $g_i$ and $h_i$ in (\ref{eq:dceq}) are differentiable, DCA reduces to another popular algorithm called the Convex-Concave Procedure (CCP) \cite{Lanckriet}. The idea of this technique is to simply replace the concave part of $f_i$ (i.e., $-h_i$) by a linear overestimator as described in Algorithm \ref{alg:CCP}. By doing this, problem (\ref{eq:dcP}) becomes a convex optimization problem that can be solved using tools from convex analysis. The simplicity of CCP has made it an attractive algorithm in various areas of application. These include statistical physics (for minimizing Bethe and Kikuchi free energy functions \cite{YR}), machine learning \cite{lipp2014variations},\cite{Fung},\cite{Chapelle}, and image processing \cite{Wang}, just to name a few. In addition, CCP enjoys two valuable features: (i) if one starts with a feasible solution, the solution produced after each iteration remains feasible, and (ii) the objective value improves in every iteration, i.e., the method is a descent algorithm.
%
The proof of both claims readily comes out of the description of the algorithm and can be found, e.g., in \cite[Section 1.3.]{lipp2014variations}, where several other properties of the method are also laid out. Like many iterative algorithms, CCP relies on a stopping criterion to end. This criterion can be chosen amongst a few alternatives. For example, one could stop if the value of the objective does not improve enough, or if the iterates are too close to one another, or if the norm of the gradient of $f_0$ gets small. 

\begin{algorithm}[H]
\caption{ CCP}
\label{alg:CCP}
\begin{algorithmic}[1]
\Require $x_0,~ f_i=g_i-h_i, i=0,\ldots,m$
\State $k\leftarrow 0$
\While{stopping criterion not satisfied}
\State Convexify: $f_i^{k}(x)\mathrel{\mathop{:}}=g_i(x)-(h_i(x_k)+\nabla h_i(x_k)^T(x-x_k)),~ i=0,\ldots,m$
\State Solve convex subroutine: $\min f_0^k(x)$, s.t. $f_i^k(x) \leq 0, i=1,\ldots,m$
\State $x_{k+1}\mathrel{\mathop{:}}= \underset{f_i^{k}(x) \leq 0}{\text{argmin}} f_0^k(x)$
\State $k \leftarrow k+1$
\EndWhile
\Ensure $x_k$
\end{algorithmic}
\end{algorithm}

Convergence results for CCP can be derived from existing results found for DCA, since CCP is a subcase of DCA as mentioned earlier. But CCP can also be seen as a special case of the family of majorization-minimization (MM) algorithms. Indeed, the general concept of MM algorithms is to iteratively upperbound the objective by a convex function and then minimize this function, which is precisely what is done in CCP. This fact is exploited by Lanckriet and Sriperumbudur in \cite{Lanckriet} and Salakhutdinov et al. in \cite{Salak} to obtain convergence results for the algorithm, showing, e.g., that under mild assumptions, CCP converges to a stationary point of the optimization problem (\ref{eq:dcP}).

\subsection{Motivation and organization of the paper}

Although a wide range of problems already appear in dc form (\ref{eq:dceq}), such a decomposition is not always available. In this situation, algorithms of dc programming, such as CCP, generally fail to be applicable. Hence, the question arises as to whether one can (efficiently) compute a difference of convex decomposition (dcd) of a given function. This challenge has been raised several times in the literature. For instance, Hiriart-Urruty~\cite{hiriart} states ``All the proofs [of existence of dc decompositions] we know are ``constructive'' in the sense that they indeed yield [$g_i$] and [$h_i$] satisfying (\ref{eq:dceq}) but could hardly be carried over [to] computational aspects''. As another example, Tuy~\cite{Tuy} writes: ``The dc structure of a given problem is not always apparent or easy to disclose, and even when it is known explicitly, there remains for the problem solver the hard task of bringing this structure to a form amenable to computational analysis.''


Ideally, we would like to have not just the ability to find one dc decomposition, but also to optimize over the set of valid dc decompositions. Indeed, dc decompositions are not unique: Given a decomposition $f=g-h$, one can produce infinitely many others by writing $f=g+p-(h+p),$ for any convex function $p$. This naturally raises the question whether some dc decompositions are better than others, for example for the purposes of CCP. 


In this paper we consider these decomposition questions for multivariate polynomials. Since polynomial functions are finitely parameterized by their coefficients, they provide a convenient setting for a computational study of the dc decomposition questions. Moreover, in most practical applications, the class of polynomial functions is large enough for modeling purposes as polynomials can approximate any continuous function on compact sets with arbitrary accuracy. It could also be interesting for future research to explore the potential of dc programming techniques for solving the polynomial optimization problem. This is the problem of minimizing a multivariate polynomial subject to polynomial inequalities and is currently an active area of research with applications throughout engineering and applied mathematics. In the case of quadratic polynomial optimization problems, the dc decomposition approach has already been studied~\cite{Bomze},\cite{quadratic_dc}.

With these motivations in mind, we organize the paper as follows. In Section~\ref{sec:undominated}, we start by showing that unlike the quadratic case, the problem of testing if two given polynomials $g,h$ form a valid dc decomposition of a third polynomial $f$ is NP-hard (Proposition~\ref{prop:dcd.NPhard}). We then investigate a few candidate optimization problems for finding dc decompositions that speed up the convex-concave procedure. In particular, we extend the notion of an undominated dc decomposition from the quadratic case~\cite{Bomze} to higher order polynomials. We show that an undominated dcd always exists (Theorem~\ref{thm:undom.dcd}) and can be found by minimizing a certain linear function of one of the two convex functions in the decomposition. However, this optimization problem is proved to be NP-hard for polynomials of degree four or larger (Proposition~\ref{prop:undominated.NPhard}). To cope with intractability of finding optimal dc decompositions, we propose in Section~\ref{sec:ConvRelax} a class of algebraic relaxations that allow us to optimize over subsets of dcds. These relaxations will be based on the notions of \emph{dsos-convex, sdsos-convex,} and \emph{sos-convex} polynomials (see Definition~\ref{def:alternatives.convexity}), which respectively lend themselves to {\gh\emph{linear, second order cone,}} and \emph{semidefinite programming}. In particular, we show that a dc decomposition can always be found by linear programming (Theorem~\ref{thm:diff.dsos}). Finally, in Section~\ref{sec:numerical.results}, we perform some numerical experiments to compare the scalability and performance of our different algebraic relaxations.

\section{Polynomial dc decompositions and their complexity} \label{sec:undominated}

To study questions around dc decompositions of polynomials more formally, let us start by introducing some notation. A multivariate \emph{polynomial} $p(x)$ in variables
$x\mathrel{\mathop:}=(x_1,\ldots,x_n)^T$ is a function from
$\mathbb{R}^n$ to $\mathbb{R}$ that is a finite linear combination
of monomials:
\begin{equation}
p(x)=\sum_{\alpha}c_\alpha x^\alpha=\sum_{\alpha_1, \ldots,
\alpha_n} c_{\alpha_1,\ldots,\alpha_n} x_1^{\alpha_1} \cdots
x_n^{\alpha_n} ,
\end{equation}
where the sum is over $n$-tuples of nonnegative
integers $\alpha_i$. 
The \emph{degree} of a monomial $x^\alpha$ is equal to $\alpha_1 +
\cdots + \alpha_n$. The degree of a polynomial $p(x)$ is defined to
be the highest degree of its component monomials. A simple counting
argument shows that a polynomial of degree $d$ in $n$ variables has
$\binom{n+d}{d}$ coefficients. A \emph{homogeneous polynomial} (or a
\emph{form}) is a polynomial where all the monomials have the same
degree. An $n$-variate form $p$ of degree $d$ has $\binom{n+d-1}{d}$ coefficients. {\gh We denote the set of polynomials (resp. forms) of degree $2d$ in $n$ variables by $\tilde{\mathcal{H}}_{n,2d}$ (resp. $\mathcal{H}_{n,2d}$). }

Recall that a symmetric matrix $A$ is positive semidefinite (psd) if $x^TAx \geq 0$ for all $x \in \mathbb{R}^n$; this will be denoted by the standard notation $A \succeq 0.$ Similarly, a polynomial $p(x)$ is said to be \emph{nonnegative} or
positive semidefinite if $p(x)\geq0$ for all $x\in\mathbb{R}^n$.
For a polynomial $p$, we denote its Hessian by $H_p$. The second order characterization of convexity states that $p$ is convex if and only if $H_p(x) \succeq 0$, $\forall x \in\mathbb{R}^n.$

\begin{definition}
We say a polynomial $g$ is a \emph{dcd} of a polynomial $f$ if $g$ is convex and $g-f$ is convex. 
\end{definition}
Note that if we let $h\mathrel{\mathop{:}}=g-f$, then indeed we are writing $f$ as a difference of two convex functions $f=g-h.$
It is known that any polynomial $f$ has a (polynomial) dcd $g$. A proof of this is given, e.g., in \cite{Wang}, or in Section \ref{subsec:diffconv}, where it is obtained as corollary of a stronger theorem (see Corollary \ref{cor:SDSOSetc}). By default, all dcds considered in the sequel will be of even degree. Indeed, if $f$ is of even degree $2d$, then it admits a dcd $g$ of degree $2d$. If $f$ is of odd degree $2d-1$, it can be viewed as a polynomial $\tilde{f}$ of even degree $2d$ with highest-degree coefficients which are 0. The previous result then remains true, and $\tilde{f}$ admits a dcd of degree $2d$. 

Our results show that such a decomposition can be found efficiently (e.g., by linear programming); see Theorem \ref{th:mainth}. Interestingly enough though, it is not easy to check if a candidate $g$ is a valid dcd of $f$.

\begin{proposition}\label{prop:dcd.NPhard}
Given two $n$-variate polynomials $f$ and $g$ of degree 4, with $f\neq g$, it is strongly NP-hard \footnote{For a
strongly NP-hard problem, even a pseudo-polynomial time algorithm cannot exist unless P=NP \cite{GareyJohnson_Book}.} to determine whether $g$ is a dcd of $f$.\footnote{If we do not add the condition on the input that $f\neq g$, the problem would again be NP-hard (in fact, this is even easier to prove). However, we believe that in any interesting instance of this question, one would have $f\neq g$.}
\end{proposition}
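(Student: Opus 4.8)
The plan is to exhibit a polynomial-time reduction from a problem already known to be strongly NP-hard: deciding convexity of degree-four polynomials (indeed, even of quartic forms), as established by Ahmadi, Olshevsky, Parrilo, and Tsitsiklis. The guiding intuition is that, by definition, verifying that $g$ is a dcd of $f$ amounts to verifying \emph{two} convexity conditions simultaneously---that $g$ is convex and that $g-f$ is convex---so the dcd-recognition problem should be at least as hard as a single convexity test. The technical content is to arrange $f$ and $g$ so that one of the two conditions is trivially satisfied, leaving the other to encode an arbitrary hard convexity instance.

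Concretely, given an $n$-variate quartic form $p$ (an arbitrary instance of the convexity problem), I would set
\[
g := p, \qquad f := p - \sum_{i=1}^n x_i^2 .
\]
Both $f$ and $g$ are $n$-variate polynomials of degree $4$ (their degree-four parts coincide with $p$), and $f \neq g$ since $\sum_i x_i^2 \neq 0$, so the input meets the hypotheses of the proposition. The key observation is that $g - f = \sum_{i=1}^n x_i^2$ is convex \emph{regardless} of $p$; hence the second convexity requirement in the definition of a dcd holds automatically. Consequently, $g$ is a dcd of $f$ if and only if $g=p$ is convex. A decision procedure for the dcd question would therefore decide convexity of the arbitrary quartic form $p$.

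It remains to argue that this is a legitimate reduction witnessing \emph{strong} NP-hardness. The map $p \mapsto (f,g)$ is computable in time polynomial in the input size, and---crucially for strong hardness---it introduces no new large coefficients: the only change is subtracting the unit-coefficient quadratic $\sum_i x_i^2$, so the bit-sizes and magnitudes of all coefficients are preserved up to an additive constant. Since the source problem is strongly NP-hard and the reduction neither blows up the numbers nor alters the degree, the target problem inherits strong NP-hardness.

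I expect the only delicate point to be the correct invocation of the convexity-hardness result, together with the easy but necessary checks that the perturbation keeps $f \neq g$ while leaving $g-f$ convex and the degree equal to $4$; the reduction itself is immediate once these are in place. As a remark, the argument can be made fully homogeneous by instead subtracting the convex quartic form $\left(\sum_i x_i^2\right)^2$, yielding $f$ and $g$ that are both quartic forms.
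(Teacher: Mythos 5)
Your proposal is correct, but it takes a genuinely different (and shorter) route than the paper. You reduce directly from deciding convexity of quartic forms, which is strongly NP-hard by \cite{NPhard_Convexity_MathProg}, and you trivialize one of the two conditions in the definition of a dcd by arranging $g-f=\sum_i x_i^2$; the entire difficulty is then pinned on the single condition ``$g$ convex.'' The paper instead reduces from nonnegativity of \emph{biquadratic forms} and inlines the construction behind the quartic-convexity hardness result: given a biquadratic form $b(x;y)$, it builds the auxiliary convex quartic $r$ and sets $g=q+r$, $f=r$ with $q=b+r$, so that (by Theorem 3.2 of \cite{NPhard_Convexity_MathProg}) $b$ is nonnegative iff $g-f=q$ is convex, with convexity of $g$ coming for free in yes-instances. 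So the paper places the hardness on the ``$g-f$ convex'' condition, while you place it on the ``$g$ convex'' condition; the two reductions are complementary in that sense, and yours additionally shows the problem remains hard under the promise that $g-f$ is convex. What your version buys is modularity and brevity---you invoke the quartic-convexity hardness theorem as a black box, exactly as the paper itself does in the proof of Proposition~\ref{prop:undominated.NPhard}, so the invocation is unquestionably legitimate---while the paper's version works one level lower in the reduction chain and is self-contained modulo the biquadratic nonnegativity hardness of Ling et al. Your bookkeeping for \emph{strong} NP-hardness (the reduction only adds unit coefficients, so no numbers blow up) is also handled correctly; the only cosmetic caveat is the degenerate instance $p\equiv 0$, where $g$ would not have degree $4$, but such instances are trivially decidable and excluded from the hard instances, so the reduction is unaffected.
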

\begin{proof}
We will show this via a reduction from the problem of testing nonnegativity of biquadratic forms, which is already known to be strongly NP-hard \cite{Ling_et_al_Biquadratic}, \cite{NPhard_Convexity_MathProg}. A biquadratic form $b(x,y)$ in the variables $x=(x_1,\ldots,x_n)^T$ and $y=(y_1,\ldots,y_m)^T$ is a quartic form that can be written as $$b(x;y)=\sum_{i\leq j, k\leq l} a_{ijkl} x_ix_jy_ky_l.$$
Given a biquadratic form $b(x;y)$, define the $n \times n$ polynomial matrix $C(x,y)$ by setting $[C(x,y)]_{ij} \mathrel{\mathop{:}}=\frac{\partial b(x;y)}{\partial x_i \partial y_j},$  and let $\gamma$ be the largest coefficient in absolute value of any monomial present in some entry of $C(x,y)$. Moreover, we define 
$$r(x;y)\mathrel{\mathop{:}}=\frac{n^2 \gamma}{2}\sum_{i=1}^n x_i^4+\sum_{i=1}^{n} y_i^4+\sum_{1\leq i<j\leq n} x_i^2x_j^2+\sum_{1\leq i<j \leq n} y_i^2y_j^2.$$ 
It is proven in \cite[Theorem 3.2.]{NPhard_Convexity_MathProg} that $b(x;y)$ is nonnegative if and only if $$q(x,y) \mathrel{\mathop{:}}=b(x;y)+r(x,y)$$ is convex. We now give our reduction. Given a biquadratic form $b(x;y)$, we take 
$g=q(x,y)+r(x,y)$ and $f=r(x,y)$. If $b(x;y)$ is nonnegative, from the theorem quoted above, $g-f=q$ is convex. Furthermore, it is straightforward to establish that $r(x,y)$ is convex, which implies that $g$ is also convex. This means that $g$ is a dcd of $f$. If $b(x;y)$ is not nonnegative, then we know that $q(x,y)$ is not convex. This implies that $g-f$ is not convex, and so $g$ cannot be a dcd of $f$. \qed
\end{proof}

Unlike the quartic case, it is worth noting that in the quadratic case, it is easy to test whether a polynomial $g(x)=x^TGx$ is a dcd of $f(x)=x^TFx$. Indeed, this amounts to testing whether $F \succeq 0$ and $G -F \succeq 0$ which can be done in $O(n^3)$ time. 

As mentioned earlier, there is not only one dcd for a given polynomial $f$, but an infinite number. Indeed, if $f=g-h$ with $g$ and $h$ convex then any convex polynomial $p$ generates a new dcd $f=(g+p)-(h+p)$. It is natural then to investigate if some dcds are better than others, e.g., for use in the convex-concave procedure.

Recall that the main idea of CCP is to upperbound the non-convex function $f=g -h$ by a convex function $f^k$. These convex functions are obtained by linearizing $h$ around the optimal solution of the previous iteration. Hence, a reasonable way of choosing a good dcd would be to look for dcds of $f$ that minimize the curvature of $h$ around a point. Two natural formulations of this problem are given below. The first one attempts to minimize the \emph{average}\footnote{ Note that $\mbox{Tr } H_h(\bar{x})$ (resp. $\lambda_{\max} H_h(\bar{x})$) gives the average (resp. maximum) of $y^TH_h(\bar{x})y$ over $\{y~|~||y||=1\}$.} curvature of $h$ at a point $\bar{x}$ over all directions:
\begin{equation} \label{eq:trace.point}
\begin{aligned}
&\min_{g} \mbox{Tr } H_h(\bar{x})\\
&\text{s.t. } f=g-h, g,h \text{ convex}.
\end{aligned}
\end{equation}
The second one attempts to minimize the \emph{worst-case}\footnotemark[\value{footnote}] curvature of $h$ at a point $\bar{x}$ over all directions:
\begin{equation} \label{eq:lambda.max.point}
\begin{aligned}
&\min_{g} \lambda_{\max} H_h(\bar{x})\\
&\text{s.t. } f-g-h, g, h \text{ convex}.
\end{aligned}
\end{equation}
A few numerical experiments using these objective functions will be presented in Section \ref{subsec:scalibility}. 

Another popular notion that appears in the literature and that also relates to finding dcds with minimal curvature is that of \emph{undominated dcds.} These were studied in depth by Bomze and Locatelli in the quadratic case \cite{Bomze}. We extend their definition to general polynomials here. 

\begin{definition}
Let $g$ be a dcd of $f$. A dcd $g'$ of $f$ is said to dominate $g$ if $g-g'$ is convex and nonaffine.
A dcd $g$ of $f$ is \emph{undominated} if no dcd of $f$ dominates $g$.
\end{definition}
Arguments for chosing undominated dcds can be found in \cite{Bomze}, \cite[Section 3]{Dur}. One motivation that is relevant to CCP appears in Proposition \ref{th:CCPundom}\footnote{A variant of this proposition in the quadratic case appears in \cite[Proposition 12]{Bomze}.}. Essentially, the proposition shows that if we were to start at some initial point and apply one iteration of CCP, the iterate obtained using a dc decomposition $g$ would always beat an iterate obtained using a dcd dominated by $g$.
%
%
\begin{proposition} \label{th:CCPundom}
Let $g$ and $g'$ be two dcds of $f$. Define the convex functions $h\mathrel{\mathop{:}}=g-f$ and $h'\mathrel{\mathop{:}}=g'-f$, and assume that $g'$ dominates $g$. For a point $x_0$ in $\mathbb{R}^n$, define the convexified versions of $f$
\begin{align*}
f_g(x)\mathrel{\mathop{:}}=g(x)-(h(x_0)+\nabla h(x_0)^T(x-x_0)),\\
f_{g'}(x)\mathrel{\mathop{:}}=g'(x)-(h'(x_0)+\nabla h'(x_0)^T(x-x_0)).
\end{align*}  
Then, we have $$f_g'(x) \leq f_{g}(x), \forall x.$$
\end{proposition}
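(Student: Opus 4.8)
The plan is to compute the pointwise difference $f_g(x) - f_{g'}(x)$ directly and show it is nonnegative by invoking the first-order characterization of convexity. The key structural observation is that the domination hypothesis tells us $g - g'$ is convex, and that, because $h = g-f$ and $h' = g'-f$ share the same subtracted term $f$, we have the clean identity
\begin{equation*}
g - g' = (h+f) - (h'+f) = h - h'.
\end{equation*}
So a single convex auxiliary function controls both the difference of the convex parts $g,g'$ and the difference of the concave parts $h,h'$.

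First I would introduce $\phi \mathrel{\mathop{:}}= g - g' = h - h'$, which is convex (indeed nonaffine) by the assumption that $g'$ dominates $g$. Then I would subtract the two defining expressions, grouping the $g$-terms, the constant terms evaluated at $x_0$, and the gradient terms separately:
\begin{align*}
f_g(x) - f_{g'}(x) &= \big(g(x)-g'(x)\big) - \big(h(x_0)-h'(x_0)\big) \\
&\quad - \big(\nabla h(x_0)-\nabla h'(x_0)\big)^T(x-x_0).
\end{align*}
Using $g-g' = h-h' = \phi$ and linearity of the gradient, each grouped term collapses into $\phi$, yielding
\begin{equation*}
f_g(x) - f_{g'}(x) = \phi(x) - \phi(x_0) - \nabla\phi(x_0)^T(x-x_0).
\end{equation*}

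To finish, I would apply the gradient inequality for convex differentiable functions: since $\phi$ is convex, $\phi(x) \geq \phi(x_0) + \nabla\phi(x_0)^T(x-x_0)$ for every $x$, so the right-hand side above is nonnegative for all $x$. This gives $f_g(x) - f_{g'}(x) \geq 0$, i.e. $f_{g'}(x) \leq f_g(x)$ for all $x$, as claimed.

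Honestly, there is no serious obstacle here once the reduction is set up; the only thing to be careful about is the bookkeeping that makes the gradient terms combine, which hinges entirely on the cancellation $g-g'=h-h'$. The one point worth noting explicitly is that the argument uses differentiability of $\phi$ (hence of $g,g',h,h'$) so that $\nabla h(x_0)$, $\nabla h'(x_0)$ are well-defined and the first-order convexity inequality applies; this is consistent with the differentiability hypothesis under which CCP (as opposed to the general subgradient-based DCA) is stated.
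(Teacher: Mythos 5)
Your proof is correct and follows essentially the same route as the paper: your auxiliary function $\phi = g - g' = h - h'$ is exactly the paper's convex polynomial $c = g - g'$ (with $h' = h - c$), and both arguments reduce the difference $f_g(x) - f_{g'}(x)$ to the convexity gap $\phi(x) - \phi(x_0) - \nabla\phi(x_0)^T(x-x_0)$ and conclude via the first-order characterization of convexity.
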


\begin{proof}
As $g'$ dominates $g$, there exists a nonaffine convex polynomial $c$ such that $c=g-g'$.
We then have $g'=g-c$ and $h'=h-c$, and
\begin{equation*}
\begin{aligned}
f_g'(x)&=g(x)-c(x)-h(x_0)+c(x_0)-\nabla h(x_0)^T(x-x_0)+\nabla c(x_0)^T(x-x_0)\\
&= f_{g}(x)-(c(x)-c(x_0)-\nabla c(x_0)^T (x-x_0)).\\
\end{aligned}
\end{equation*}
The first order characterization of convexity of $c$ then gives us  
\begin{align*}
f_{g'}(x) \leq f_{g}(x), \forall x. ~~\qed
\end{align*}
\end{proof}

In the quadratic case, it turns out that an optimal solution to (\ref{eq:trace.point}) is an undominated dcd \cite{Bomze}. A solution given by (\ref{eq:lambda.max.point}) on the other hand is not necessarily undominated. Consider the quadratic function $$f(x)=8x_1^2-2x_2^2-8x_3^2$$ and assume that we want to decompose it using (\ref{eq:lambda.max.point}). An optimal solution is given by $g^*(x)=8x_1^2+6x_2^2$ and $h^*(x)=8x_2^2+8x_3^2$ with $\lambda_{\max}H_h=8.$ This is clearly dominated by $g'(x)=8x_1^2$ as $g^*(x)-g'(x)=6x_2^2$ which is convex.

When the degree is higher than 2, it is no longer true however that solving (\ref{eq:trace.point}) returns an undominated dcd. Consider for example the degree-4 polynomial $$f(x)=x^{12}-x^{10}+x^{6}-x^{4}.$$
A solution to (\ref{eq:trace.point}) with $\bar{x}=0$ is given by $g(x)=x^{12}+x^{6}$ and $h(x)=x^{10}+x^{4}$ (as $\mbox{Tr}H_h(0)=0$). This is dominated by the dcd $g(x)=x^{12}-x^8+x^6$ and $h(x)=x^{10}-x^8+x^4$ as $g-g'=x^8$ is clearly convex.

It is unclear at this point how one can obtain an undominated dcd for higher degree polynomials, or even if one exists. In the next theorem, we show that such a dcd always exists and provide an optimization problem whose optimal solution(s) will always be undominated dcds. This optimization problem involves the integral of a polynomial over a sphere which conveniently turns out to be an explicit linear expression in its coefficients.

\begin{proposition}[\cite{intSphere}]
Let $S^{n-1}$ denote the unit sphere in $\mathbb{R}^n$. For a monomial $x_1^{\alpha_1}\ldots x_n^{\alpha_n}$, define $\beta_j\mathrel{\mathop{:}}=\frac{1}{2} (\alpha_j+1)$.
Then $$\int_{S^{n-1}} x_1^{\alpha_1}\ldots x_n^{\alpha_n} d\sigma = \begin{cases} 0 &\text{ if some $\alpha_j$ is odd}, \\ \frac{2\Gamma(\beta_1) \ldots \Gamma(\beta_n)}{\Gamma(\beta_1+\ldots+\beta_n)} &\text{ if all $\alpha_j$ are even,} \end{cases}$$
\end{proposition}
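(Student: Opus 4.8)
The plan is to compute the integral by reducing to the well-known integral of a monomial over the sphere, which can be evaluated using the Gamma function via the classical "polar coordinates" trick relating integrals over $\mathbb{R}^n$ to integrals over $S^{n-1}$. First I would note that the vanishing case is immediate by symmetry: if some $\alpha_j$ is odd, then the integrand is an odd function of $x_j$, and since $S^{n-1}$ is symmetric under $x_j \mapsto -x_j$, the integral is zero. The substance is therefore in the even case, and the hard part will be evaluating the constant explicitly.

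For the even case, the key idea is to integrate the Gaussian-weighted monomial over all of $\mathbb{R}^n$ in two ways. On one hand, using the product structure of the Gaussian, I would write
\begin{equation*}
\int_{\mathbb{R}^n} x_1^{\alpha_1}\cdots x_n^{\alpha_n} e^{-\|x\|^2}\,dx = \prod_{j=1}^n \int_{-\infty}^{\infty} t^{\alpha_j} e^{-t^2}\,dt,
\end{equation*}
and each one-dimensional factor evaluates, via the substitution $u = t^2$, to $\Gamma(\tfrac{1}{2}(\alpha_j+1)) = \Gamma(\beta_j)$ when $\alpha_j$ is even (and to $0$ when odd, recovering the first case). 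On the other hand, I would pass to polar coordinates $x = r\omega$ with $r = \|x\|$ and $\omega \in S^{n-1}$, so that $dx = r^{n-1}\,dr\,d\sigma(\omega)$ and $x^\alpha = r^{\alpha_1+\cdots+\alpha_n}\,\omega^\alpha$. Since the total degree is $\alpha_1 + \cdots + \alpha_n = 2(\beta_1 + \cdots + \beta_n) - n$, the radial and angular parts separate:
\begin{equation*}
\int_{\mathbb{R}^n} x^\alpha e^{-\|x\|^2}\,dx = \left(\int_0^\infty r^{2(\beta_1+\cdots+\beta_n)-1} e^{-r^2}\,dr\right)\left(\int_{S^{n-1}} \omega^\alpha\,d\sigma\right).
\end{equation*}

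The remaining step is to evaluate the radial integral: the substitution $u = r^2$ turns it into $\tfrac{1}{2}\Gamma(\beta_1 + \cdots + \beta_n)$. Equating the two expressions for the $\mathbb{R}^n$ integral and solving for the spherical integral then yields
\begin{equation*}
\int_{S^{n-1}} \omega^\alpha\,d\sigma = \frac{\prod_{j=1}^n \Gamma(\beta_j)}{\tfrac{1}{2}\Gamma(\beta_1+\cdots+\beta_n)} = \frac{2\,\Gamma(\beta_1)\cdots\Gamma(\beta_n)}{\Gamma(\beta_1+\cdots+\beta_n)},
\end{equation*}
which is exactly the claimed formula. I expect the main obstacle to be purely bookkeeping rather than conceptual: one must be careful that the degree exponent matches so that the radial integral produces precisely $\Gamma(\beta_1 + \cdots + \beta_n)$, and that the factor of $2$ and the convention for the surface measure $d\sigma$ (unnormalized, so that $\int_{S^{n-1}} d\sigma$ is the total surface area) are handled consistently. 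Since this is a known result cited from \cite{intSphere}, I would present the Gaussian-integration argument as a short self-contained derivation rather than reproving finer measure-theoretic details.
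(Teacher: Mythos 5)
Your proof is correct and complete. The paper itself offers no proof of this proposition --- it is quoted directly from the cited reference (Folland's note on integrating polynomials over spheres) --- and your Gaussian-integration argument, with the symmetry argument for the odd case and the splitting into radial and angular parts for the even case, is precisely the standard derivation given in that source; all the exponent bookkeeping and the factor of $2$ check out. One further point in your favor: you correctly flagged that the formula holds for the \emph{unnormalized} surface measure (setting all $\alpha_j=0$ yields the total area $2\pi^{n/2}/\Gamma(n/2)$, not $1$), whereas the paper's surrounding text mislabels $\sigma$ as the rotation-invariant probability measure --- a genuine inconsistency in the paper that your convention resolves in the only way compatible with the stated formula.
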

where $\Gamma$ denotes the gamma function, and $\sigma$ is the rotation invariant probability measure on $S^{n-1}.$



\begin{theorem}\label{thm:undom.dcd}
Let $f \in \tilde{\mathcal{H}}_{n,2d}.$  Consider the optimization problem
\begin{equation} \label{eq:undom.dcd}
\begin{aligned}
\min_{g \in \tilde{\mathcal{H}}_{n,2d}} &\frac{1}{\mathcal{A}_n}\int_{S^{n-1}} \mbox{Tr } H_g d \sigma\\
\text{s.t. } &g \text{ convex}, \\
&g-f \text{ convex},
\end{aligned}
\end{equation}
where $\mathcal{A}_n=\frac{2\pi^{n/2}}{\Gamma(n/2)}$ is a normalization constant which equals the area of $S^{n-1}$. Then, an optimal solution to (\ref{eq:undom.dcd}) exists and any optimal solution is an undominated dcd of $f$.
\end{theorem}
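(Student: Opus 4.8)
The plan is to reduce both assertions --- existence and undominatedness --- to a single key fact. Writing $L(g) \mathrel{\mathop:}= \frac{1}{\mathcal{A}_n}\int_{S^{n-1}} \mbox{Tr } H_g\, d\sigma$ for the objective, the first observation is that $L$ is a \emph{linear} functional of the coefficients of $g$: indeed $\mbox{Tr } H_g$ is linear in $g$, and by the preceding proposition its spherical integral is an explicit linear combination of the coefficients of $g$. The second, and crucial, ingredient is the following lemma: \emph{if $c \in \tilde{H}_{n,2d}$ is convex and nonaffine, then $L(c) > 0$.} Granting this, both claims follow quickly, so I would prove the lemma first and treat it as the heart of the argument.

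To prove the lemma, note that convexity of $c$ gives $H_c(x) \succeq 0$, hence $\mbox{Tr } H_c(x) \geq 0$, for every $x \in \mathbb{R}^n$, so $L(c) \geq 0$. For strictness, suppose $L(c) = 0$. Since $\mbox{Tr } H_c$ is continuous and nonnegative and $\sigma$ has full support on $S^{n-1}$, we must have $\mbox{Tr } H_c \equiv 0$ on $S^{n-1}$. But $\mbox{Tr } H_c$ is a form (of degree $2d-2$), so by homogeneity its vanishing on the sphere forces $\mbox{Tr } H_c \equiv 0$ on all of $\mathbb{R}^n$. For each fixed $x$, $H_c(x)$ is then positive semidefinite with zero trace, hence the zero matrix; thus $H_c \equiv 0$. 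A form with identically zero Hessian is affine, and an affine form of degree $2d \geq 2$ vanishes identically, contradicting that $c$ is nonaffine. Homogeneity is essential here: for nonhomogeneous polynomials $\mbox{Tr } H_c$ can vanish on the sphere without vanishing identically (e.g.\ $c(t)=t^6/30-t^4/6+t^2/2$ is convex and nonaffine, yet $c''(\pm 1)=0$), and the conclusion would fail.

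Undominatedness is then immediate. Let $g^*$ be optimal and suppose, for contradiction, that some dcd $g' \in \tilde{H}_{n,2d}$ dominates it. By definition $c \mathrel{\mathop:}= g^* - g'$ is convex and nonaffine, and $g'$ is itself convex with $g'-f$ convex, hence feasible for (\ref{eq:undom.dcd}). Linearity gives $L(g^*) = L(g') + L(c)$, and the lemma gives $L(c) > 0$, so $L(g^*) > L(g')$ with $g'$ feasible --- contradicting the optimality of $g^*$. Hence every optimal solution is undominated.

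For existence, I would show $L$ attains its infimum on the (nonempty, by Corollary~\ref{cor:SDSOSetc}) closed convex feasible set $F \mathrel{\mathop:}= \{g \in \tilde{H}_{n,2d} : g \text{ and } g-f \text{ convex}\}$. First, $L$ is bounded below on $F$: writing $g = f + (g-f)$ and using linearity together with $L(g-f) \geq 0$ (as $g-f$ is convex) yields $L(g) \geq L(f)$. Next, the recession cone of $F$ is exactly the cone of convex forms in $\tilde{H}_{n,2d}$, and this cone is pointed, since a form that is convex along with its negative has zero Hessian and hence (being of degree $2d \geq 2$) vanishes. By the lemma, $L$ is strictly positive on every nonzero recession direction, so each sublevel set $\{g \in F : L(g) \leq \alpha\}$ has recession cone $\{0\}$ and is therefore compact; taking $\alpha$ slightly above $\inf_F L$ gives a nonempty compact set on which the continuous $L$ attains its minimum. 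I expect this existence step to be the main obstacle: because $F$ is unbounded and only semidefinite- (not polyhedrally-) representable, attainment is not automatic, and the real work is in upgrading the pointwise bound $\mbox{Tr } H_c \geq 0$ to the strict spherical positivity of the lemma, which is precisely what makes the recession-cone argument close.
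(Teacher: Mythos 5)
Your reduction of both claims to a single positivity lemma is the right idea---it is exactly the implication that the paper's own undominatedness argument invokes (there without proof)---but your proof of that lemma, and with it your existence argument, is only valid when every polynomial in sight is homogeneous, whereas the theorem concerns $\tilde{H}_{n,2d}$, i.e., polynomials of degree at most $2d$. In the undominatedness step, $c=g^*-g'$ is a difference of two such polynomials and has no reason to be a form, so the step ``$\mbox{Tr}\,H_c$ is a form, hence vanishing on $S^{n-1}$ forces $\mbox{Tr}\,H_c\equiv 0$'' is unavailable; your own parenthetical concedes precisely this point without repairing it. The lemma does remain true for nonhomogeneous $c$ when $n\geq 2$, but for a different reason, which a complete proof must supply: $H_c\succeq 0$ with zero trace on the sphere gives $H_c\equiv 0$ on $S^{n-1}$; since $S^{n-1}$ is connected, $\nabla c$ is constant on it, say equal to $v_0$; the convex polynomial $c(x)-v_0^Tx$ then has vanishing gradient on the sphere, so by the first-order characterization of convexity every point of $S^{n-1}$ is a global minimizer of it; its set of minimizers, being convex and containing $S^{n-1}$, contains the unit ball, and a polynomial constant on a ball is constant---hence $c$ is affine. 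Connectivity, not homogeneity, is the essential ingredient: for $n=1$ your example $c(t)=t^6/30-t^4/6+t^2/2$ does not merely break the proof, it breaks the statement (take $f=0$: this $c$ is an optimal solution of (\ref{eq:undom.dcd}) with value $0$, yet it is dominated by $g'=0$), so the theorem implicitly requires $n\geq 2$ and any correct argument has to use that somewhere.

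The existence half suffers from the same conflation, and there it is fatal to the method rather than to a single step. In $\tilde{H}_{n,2d}$ the recession cone of the feasible set is indeed the cone of convex polynomials of degree at most $2d$, but this cone is \emph{not} pointed: its lineality space is the set of all affine polynomials ($h$ and $-h$ both convex iff $H_h\equiv 0$ iff $h$ affine), and these are genuinely present in $\tilde{H}_{n,2d}$ even though they are absent from the space of forms of degree $2d\geq 2$. Since both the feasible set and the objective $L$ are invariant under adding an affine function to $g$, every nonempty sublevel set $\{g\in F: L(g)\leq\alpha\}$ contains lines and is never compact, so the Weierstrass step does not close. The repair is to mod out the affine directions, which is exactly what the paper's proof does: it passes to an auxiliary problem in which $g$ is constrained to have no affine terms and to satisfy $\int_{S^{n-1}}\mbox{Tr}\,H_g\,d\sigma\leq\gamma$, and shows that this restricted feasible set is compact (positive semidefiniteness of $H_g$ plus the trace bound controls $\|H_g\|$ on the sphere, which, in the absence of affine terms, controls all coefficients of $g$), noting that optima of the restricted problem are optima of (\ref{eq:undom.dcd}). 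If you restrict your entire argument to polynomials with no affine terms (or work in the quotient by affine polynomials) and replace the homogeneity step by the connectivity argument above, your recession-cone route does go through for $n\geq 2$ and becomes a clean alternative to the paper's compactness construction.
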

Note that problem (\ref{eq:undom.dcd}) is exactly equivalent to (\ref{eq:trace.point}) in the case where $n=2$ and so can be seen as a generalization of the quadratic case.
\begin{proof}
We first show that an optimal solution to (\ref{eq:undom.dcd}) exists. As any polynomial $f$ admits a dcd, (\ref{eq:undom.dcd}) is feasible. Let $\tilde{g}$ be a dcd of $f$ and define $\gamma \mathrel{\mathop{:}}= \int_{S^{n-1}} \mbox{Tr }H_{\tilde{g}} d\sigma.$
 Consider the optimization problem given by (\ref{eq:undom.dcd}) with the additional constraints:
\begin{equation} \label{eq:undom.dcd.new.constraint}
\begin{aligned}
\min_{g \in \tilde{\mathcal{H}}_{n,2d}} &\frac{1}{\mathcal{A}_n}\int_{S^{n-1}} \mbox{Tr } H_g d \sigma\\
\text{s.t. } &g \text{ convex and with no affine terms} \\
&g-f \text{ convex},\\
&\int_{S^{n-1}} \mbox{Tr } H_{g} d\sigma \leq \gamma.
\end{aligned}
\end{equation}
Notice that any optimal solution to (\ref{eq:undom.dcd.new.constraint}) is an optimal solution to (\ref{eq:undom.dcd}). Hence, it suffices to show that (\ref{eq:undom.dcd.new.constraint}) has an optimal solution. Let $\mathcal{U}$ denote the feasible set of (\ref{eq:undom.dcd.new.constraint}). Evidently, the set $\mathcal{U}$ is closed and $g \rightarrow \int_{S^{n-1}} \mbox{Tr }H_g d \sigma$ is continuous. If we also show that $\mathcal{U}$ is bounded, we will know that the optimal solution to (\ref{eq:undom.dcd.new.constraint}) is achieved.
To see this, assume that $\mathcal{U}$ is unbounded. Then for any $\beta$,  there exists a coefficient $c_g$ of some $g \in \mathcal{U}$ that is larger than $\beta$. By absence of affine terms in $g$, $c_g$ features in an entry of $H_g$ as the coefficient of a nonzero monomial. Take $\bar{x} \in S^{n-1}$ such that this monomial evaluated at $\bar{x}$ is nonzero: this entails that at least one entry of $H_g(\bar{x})$ can get arbitrarily large. However, since $g \rightarrow \mbox{Tr }H_g$ is continuous and $\int_{S^{n-1}} \mbox{Tr }H_g d\sigma \leq \gamma$, $\exists \bar{\gamma}$ such that $\mbox{Tr }H_g(x) \leq \bar{\gamma}$, $\forall x \in S^{n-1}$. This, combined with the fact that $H_g(x) \succeq 0~ \forall x$, implies that $||H_g(x)|| \leq \bar{\gamma}, ~\forall x \in S^{n-1}$, which contradicts the fact that an entry of $H_g(\bar{x})$ can get arbitrarily large.


We now show that if $g^*$ is any optimal solution to (\ref{eq:undom.dcd}), then $g^*$ is an undominated dcd of $f$. Suppose that this is not the case. Then, there exists a dcd $g'$ of $f$ such that $g^*-g'$ is nonaffine and convex. As $g'$ is a dcd of $f$, $g'$ is feasible for (\ref{eq:undom.dcd}). The fact that $g^*-g'$ is nonaffine and convex implies that $$\int_{S^{n-1}} \mbox{Tr } H_{g^*-g'} d\sigma >0 \Leftrightarrow \int_{S^{n-1}} \mbox{Tr} H_{g^*} d \sigma >\int_{S^{n-1}} \mbox{Tr } H_{g'} d\sigma,$$  which contradicts the assumption that $g^*$ is optimal to (\ref{eq:undom.dcd}). \qed
\end{proof}

Although optimization problem (\ref{eq:undom.dcd}) is guaranteed to produce an undominated dcd, we show that unfortunately it is intractable to solve.

\begin{proposition}\label{prop:undominated.NPhard}
Given an n-variate polynomial $f$ of degree 4 with rational coefficients, and a rational number $k$, it is strongly NP-hard to decide whether there exists a feasible solution to (\ref{eq:undom.dcd}) with objective value $\leq k$.
\end{proposition}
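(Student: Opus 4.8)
The plan is to reduce from the problem of deciding convexity of a quartic form, which is strongly NP-hard \cite{NPhard_Convexity_MathProg} (the same source invoked for biquadratic nonnegativity). Given such an instance, a quartic form $f$ with rational coefficients, I would leave $f$ unchanged as the input polynomial and choose the threshold to be the average curvature produced by the trivial decomposition $g=f$:
\[ k \mathrel{\mathop{:}}= \frac{1}{\mathcal{A}_n}\int_{S^{n-1}} \mbox{Tr } H_f\, d\sigma. \]
The whole reduction rests on the claim that (\ref{eq:undom.dcd}) has a feasible $g$ with objective value $\le k$ if and only if $f$ is convex. First I would verify that $k$ is rational and polynomial-time computable: $\mbox{Tr } H_f=\sum_i \partial^2 f/\partial x_i^2$ is a degree-$2$ polynomial with rational coefficients, its odd monomials integrate to zero, and by the sphere-integration formula of \cite{intSphere} each even monomial contributes a rational multiple of $\mathcal{A}_n$, so the $\pi$-factors cancel after dividing by $\mathcal{A}_n=2\pi^{n/2}/\Gamma(n/2)$. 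Thus $k$ has polynomially bounded bit-size, which is what lets the reduction preserve \emph{strong} NP-hardness.

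The forward implication is immediate: if $f$ is convex, then $g=f$ (i.e. $h\mathrel{\mathop{:}}=g-f=0$) is feasible for (\ref{eq:undom.dcd}) and has objective value exactly $k$. The content is in the converse. For an arbitrary feasible $g$, set $h\mathrel{\mathop{:}}=g-f$; since $H_g=H_f+H_h$, the objective equals $k+\frac{1}{\mathcal{A}_n}\int_{S^{n-1}}\mbox{Tr } H_h\, d\sigma$, and convexity of $h$ forces $\mbox{Tr } H_h(x)\ge 0$ everywhere, so the objective is at least $k$. Assume it equals $k$; then $\int_{S^{n-1}}\mbox{Tr } H_h\, d\sigma=0$ for a continuous, nonnegative integrand, so $\mbox{Tr } H_h\equiv 0$ on $S^{n-1}$. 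The crux, and the step I expect to be the main obstacle, is to boost this into $H_h\equiv 0$ on all of $\mathbb{R}^n$. Since $g$ has degree at most $4$, $p\mathrel{\mathop{:}}=\mbox{Tr } H_h$ is a polynomial of degree at most $2$ that is nonnegative on $\mathbb{R}^n$ and vanishes on the whole sphere; a short argument shows that any degree-$\le 2$ polynomial vanishing on $S^{n-1}$ must be a scalar multiple of $\|x\|^2-1$, and such a multiple is nonnegative everywhere only if it is identically zero. Hence $p\equiv 0$, and together with $H_h\succeq 0$ everywhere this yields $H_h\equiv 0$, i.e. $h$ is affine. But then $H_g=H_f$, so $g$ is convex precisely when $f$ is, contradicting feasibility of $g$ together with nonconvexity of $f$. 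Therefore, when $f$ is not convex no feasible $g$ can reach objective $\le k$.

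Putting the two directions together, the decision question for (\ref{eq:undom.dcd}) answers exactly whether $f$ is convex, and the transformation (leaving $f$ alone, adjoining the rational number $k$) is polynomial and number-preserving, so strong NP-hardness transfers. I anticipate that the only genuinely delicate point is the implication ``$\mbox{Tr } H_h$ vanishes on $S^{n-1}$'' $\Rightarrow$ ``$H_h\equiv 0$''; working with the averaged integral objective and exploiting that the Hessian entries of a quartic have degree at most two is precisely what makes it go through. If one prefers, the same effect is obtained by taking $f$ to be the form $q=b+r$ from the proof of Proposition~\ref{prop:dcd.NPhard} and arguing by homogeneity that a Hessian vanishing on the sphere vanishes everywhere, which also ties the construction back to biquadratic nonnegativity.
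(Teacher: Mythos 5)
Your proposal is correct and follows essentially the same route as the paper: a reduction from deciding convexity of quartic polynomials with $f=q$ and $k=\frac{1}{\mathcal{A}_n}\int_{S^{n-1}}\mbox{Tr }H_q\,d\sigma$, arguing that a feasible $g$ with objective value $\leq k$ forces $\int_{S^{n-1}}\mbox{Tr }H_{g-f}\,d\sigma=0$ and hence a contradiction when $q$ is nonconvex. In fact your write-up is somewhat more careful than the paper's: you verify that $k$ is rational and of polynomial bit-size (needed for strong NP-hardness), and you supply the missing step that the vanishing integral forces $g-f$ to be \emph{affine} (via the degree-$2$ trace vanishing on the sphere and positive semidefiniteness of the Hessian), whereas the paper asserts $g-f=0$ outright; either conclusion yields $H_g=H_f$ and the same contradiction.
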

\begin{proof}
We give a reduction from the problem of deciding convexity of quartic polynomials. Let $q$ be a quartic polynomial. We take $f=q$ and $k=\frac{1}{\mathcal{A}_n} \int_{S^{n-1}} \mbox{Tr } H_q(x)$. If $q$ is convex, then $g=q$ is trivially a dcd of $f$ and
\begin{align} \label{eq:condition.g}
\frac{1}{\mathcal{A}_n} \int_{S^{n-1}} \mbox{Tr } H_g d\sigma \leq k.
\end{align} 
If $q$ is not convex, assume that there exists a feasible solution $g$ for (\ref{eq:undom.dcd}) that satisfies (\ref{eq:condition.g}). From (\ref{eq:condition.g}) we have
\begin{align} \label{eq:ineq1}
\int_{S^{n-1}} \mbox{Tr } H_g(x) \leq \int_{S^{n-1}} \mbox{Tr } H_f d\sigma \Leftrightarrow \int_{S^{n-1}} \mbox{Tr } H_{f-g} d \sigma \geq 0.
\end{align}
But from (\ref{eq:undom.dcd}), as $g-f$ is convex, $\int_{S^{n-1}} \mbox{Tr }H_{g-f} d\sigma \geq 0.$
Together with (\ref{eq:ineq1}), this implies that $$\int_{S^{n-1}} \mbox{Tr }H_{g-f}d \sigma=0$$ which in turn {\gh implies that $H_{g-f}(x)=H_g(x)-H_f(x)=0$. To see this, note that $\mbox{Tr}(H_{g-f})$ is a nonnegative polynomial which must be identically equal to $0$ since its integral over the sphere is $0$. As $H_{g-f}(x)\succeq 0,\forall x$, we get that $H_{g-f}=0.$ Thus, $H_g(x)=H_f(x), ~\forall x$, which is not possible as $g$ is convex and $f$ is not.}\qed
\end{proof}
We remark that solving (\ref{eq:undom.dcd}) in the quadratic case (i.e., $2d=2$) is simply a semidefinite program.

\section{Alegbraic relaxations and more tractable subsets of the set of convex polynomials} \label{sec:ConvRelax}

We have just seen in the previous section that for polynomials with degree as low as four, some basic tasks related to dc decomposition are computationally intractable. In this section, we identify three subsets of the set of convex polynomials that lend themselves to polynomial-time algorithms. These are the sets of \emph{sos-convex, sdsos-convex}, and \emph{dsos-convex} polynomials, which will respectively lead to semidefinite, second order cone, and linear programs. The latter two concepts are to our knowledge new and are meant to serve as more scalable alternatives to sos-convexity. All three concepts certify convexity of polynomials via explicit algebraic identities, which is the reason why we refer to them as algebraic relaxations.


\subsection{DSOS-convexity, SDSOS-convexity, SOS-convexity}

To present these three notions we need to introduce some notation and briefly review the concepts of sos, dsos, and sdsos polynomials.

We denote the set of nonnegative polynomials (resp. forms) in $n$ variables and of degree $d$ by $\tilde{PSD}_{n,d}$ (resp. $PSD_{n,d}$).
A polynomial $p$ is a \emph{sum of squares} (sos) if it can be written as $p(x)=\sum_{i=1}^r q_i^2(x)$ for some polynomials $q_1,\ldots,q_r$. The set of sos polynomials (resp. forms) in $n$ variables and of degree $d$ is denoted by $\tilde{SOS}_{n,d}$ (resp. $SOS_{n,d}$).
We have the obvious inclusion $\tilde{SOS}_{n,d}\subseteq \tilde{PSD}_{n,d}$ (resp. $SOS_{n,d}\subseteq PSD_{n,d}$), which is strict unless  $d=2$, or $n=1$, or $(n,d)=(2,4)$ (resp. $d=2$, or $n=2$, or $(n,d)=(3,4)$)~\cite{Hilbert_1888},~\cite{Reznick}.

Let $\tilde{z}_{n,d}(x)$ (resp. $z_{n,d}(x)$) denote the vector of all monomials in $x=(x_1,\ldots,x_n)$ of degree up to (resp. exactly) $d$; the length of this vector is $\binom{n+d}{d}$ (resp. $\binom{n+d-1}{d}$). It is well known that a polynomial (resp. form) $p$ of degree $2d$ is sos if and only if it can be written as $p(x)=\tilde{z}^T_{n,d}(x)Q\tilde{z}_{n,d}(x)$ (resp. $p(x)=z_{n,d}^T(x)Qz_{n,d}(x)$), for some psd matrix $Q$~\cite{sdprelax},~\cite{PhD:Parrilo}. The matrix $Q$ is generally called the Gram matrix of $p$. An SOS optimization problem is the problem of minimizing a linear function over the intersection of the convex cone $SOS_{n,d}$ with an affine subspace. The previous statement implies that SOS optimization problems can be cast as semidefinite programs.

We now define dsos and sdsos polynomials, which were recently proposed by Ahmadi and Majumdar~\cite{iSOS_journal},~\cite{dsos_ciss14} as more tractable subsets of sos polynomials. When working with dc decompositions of $n$-variate polynomials, we will end up needing to impose sum of squares conditions on polynomials that have $2n$ variables (see Definition~\ref{def:alternatives.convexity}). While in theory the SDPs arising from sos conditions are of polynomial size, in practice we rather quickly face a scalability challenge. For this reason, we also consider the class of dsos and sdsos polynomials, which while more restrictive than sos polynomials, are considerably more tractable. For example, Table \ref{tab:dsos} in Section \ref{subsec:scalibility} shows that when $n=14$, dc decompositions using these concepts are about 250 times faster than an sos-based approach. At $n=18$ variables, we are unable to run the sos-based approach on our machine. With this motivation in mind, let us start by recalling some concepts from linear algebra.



\begin{definition}\label{def:dd.sdd}
A symmetric matrix $M$ is said to be \emph{diagonally dominant (dd)} if $m_{ii} \geq \sum_{j \neq i}|m_{ij}|$ for all $i$, and \emph{strictly diagonally dominant} if  $m_{ii} > \sum_{j \neq i}|m_{ij}|$ for all $i$. 
We say that $M$ is \emph{scaled diagonally dominant (sdd)} if there exists a diagonal matrix $D,$ with positive diagonal entries, such that $DAD$ is dd.
\end{definition}
We have the following implications from Gershgorin's circle theorem
\begin{align}\label{eq:implic.matrices}
\text{M } dd \Rightarrow \text{M } sdd \Rightarrow \text{M } psd.
\end{align}
Furthermore, notice that requiring $M$ to be dd can be encoded via a linear program (LP) as the constraints are linear inequalities in the coefficients of $M$. Requiring that $M$ be sdd can be encoded via a second order cone program (SOCP). This follows from the fact that $M$ is sdd if and only if $$M=\sum_{i<j} M^{ij}_{2\times 2},$$ where each $M^{ij}_{2 \times 2}$ is an $n \times n$ symmetric matrix with zeros everywhere except four entries $M_{ii},M_{ij},M_{ji}, M_{jj}$ which must make the $2 \times 2$ matrix $\begin{pmatrix} M_{ii} & M_{ij} \\ M_{ji} & M_{jj} \end{pmatrix}$ symmetric positive semidefinite \cite{iSOS_journal}. These constraints are \emph{rotated quadratic cone constraints} and can be imposed via SOCP \cite{Alizadeh}.

\begin{definition}[\cite{iSOS_journal}]
A polynomial $p \in \tilde{\mathcal{H}}_{n,2d}$ is said to be 
\begin{itemize}
\item \emph{diagonally-dominant-sum-of-squares (dsos)} if it admits a representation $p(x)=\tilde{z}^T_{n,d}(x)Q\tilde{z}_{n,d}(x)$, where $Q$ is a dd matrix.
\item \emph{scaled-diagonally-dominant-sum-of-squares (sdsos)} it it admits a representation $p(x)=\tilde{z}^T_{n,d}(x)Q\tilde{z}_{n,d}(x),$ where $Q$ is an sdd matrix.
\end{itemize}
Identical conditions involving $z_{n,d}$ instead of $\tilde{z}_{n,d}$ define the sets of dsos and sdsos forms. 
\end{definition}

The following implications are again straightforward:
\begin{align}\label{eq:implicsdsos}
p(x) \text{ dsos} \Rightarrow p(x) \text{ sdsos} \Rightarrow p(x) \text{ sos} \Rightarrow p(x) \text{ nonnegative}.
\end{align}
Given the fact that our Gram matrices and  polynomials are related to each other via linear equalities, it should be clear that optimizing over the set of dsos (resp. sdsos, sos) polynomials is an LP (resp. SOCP, SDP). 

Let us now get back to convexity.

\begin{definition} \label{def:alternatives.convexity}
Let $y=(y_1,\ldots,y_n)^T$ be a vector of variables. A polynomial $p\mathrel{\mathop:}=p(x)$ is said to be
\begin{itemize}
\item \emph{dsos-convex} if $y^TH_p(x)y$ is dsos {\gh (as a polynomial in $x$ and $y$)}.
\item \emph{sdsos-convex} if $y^TH_p(x)y$ is sdsos {\gh (as a polynomial in $x$ and $y$)}.
\item \emph{sos-convex} if $y^TH_p(x)y$ is sos {\gh (as a polynomial in $x$ and $y$)}.\footnote{The notion of sos-convexity has already appeared in the study of semidefinite representability of convex sets~\cite{helton2010} and in  applications such as shaped-constrained regression in statistics~\cite{convex_fitting}.}
\end{itemize}

We denote the set of dsos-convex (resp. sdsos-convex, sos-convex, convex) forms in $\mathcal{H}_{n,2d}$ by $\Sigma_DC_{n,2d}$ (resp. $\Sigma_SC_{n,2d}$, $\Sigma C_{n,2d}$, $C_{n,2d}$). Similarly, $\tilde{\Sigma}_DC_{n,2d}$ (resp. $\tilde{\Sigma}_SC_{n,2d}$, $\tilde{\Sigma} C_{n,2d}$, $\tilde{C}_{n,2d}$) denote the set of dsos-convex (resp. sdsos-convex, sos-convex, convex) polynomials in $\tilde{\mathcal{H}}_{n,2d}$.
\end{definition}
The following inclusions 
\begin{align}\label{eq:inclusion.cones}
\Sigma_DC_{n,2d} \subseteq \Sigma_SC_{n,2d} \subseteq \Sigma C_{n,2d} \subseteq C_{n,2d}
\end{align}
are a direct consequence of (\ref{eq:implicsdsos}) and the second-order necessary and sufficient condition for convexity which reads $$ p(x) \text{ is convex } \Leftrightarrow H_p(x) \succeq 0, \forall x \in \mathbb{R}^n \Leftrightarrow y^TH_p(x)y \geq 0, \forall x,y \in \mathbb{R}^n.$$ 
Optimizing over $\Sigma_DC_{n,2d}$ (resp. $\Sigma_S C_{n,2d}$, $\Sigma C_{n,2d}$) is an LP (resp. SOCP, SDP). The same statements are true for $\tilde{\Sigma}_DC_{n,2d}$, $\tilde{\Sigma}_SC_{n,2d}$ and $\tilde{\Sigma} C_{n,2d}$.

Let us draw these sets for a parametric family of polynomials
\begin{align} \label{eq:parampoly}
p(x_1,x_2)=2x_1^4+2x_2^4+ax_1^3x_2+bx_1^2x_2^2+cx_1x_2^3.
\end{align} Here, $a,b$ and $c$ are parameters. It is known that for bivariate quartics, all convex polynomials are sos-convex; i.e., $\Sigma C_{2,4}=C_{2,4}.$\footnote{In general, constructing polynomials that are convex but not sos-convex seems to be a nontrivial task~\cite{AAA_PP_not_sos_convex_journal}. A complete characterization of the dimensions and degrees for which convexity and sos-convexity are equivalent is given in~\cite{AAA_PP_table_sos-convexity}.}  To obtain Figure \ref{fig:cones}, we fix $c$ to some value and then plot the values of $a$ and $b$ for which $p(x_1,x_2)$ is s/d/sos-convex. As we can see, the quality of the inner approximation of the set of convex polynomials by the sets of dsos/sdsos-convex polynomials can be very good (e.g., $c=0$) or less so (e.g., $c=1$).

\begin{figure}[h!]
\centering
\includegraphics[scale=0.38]{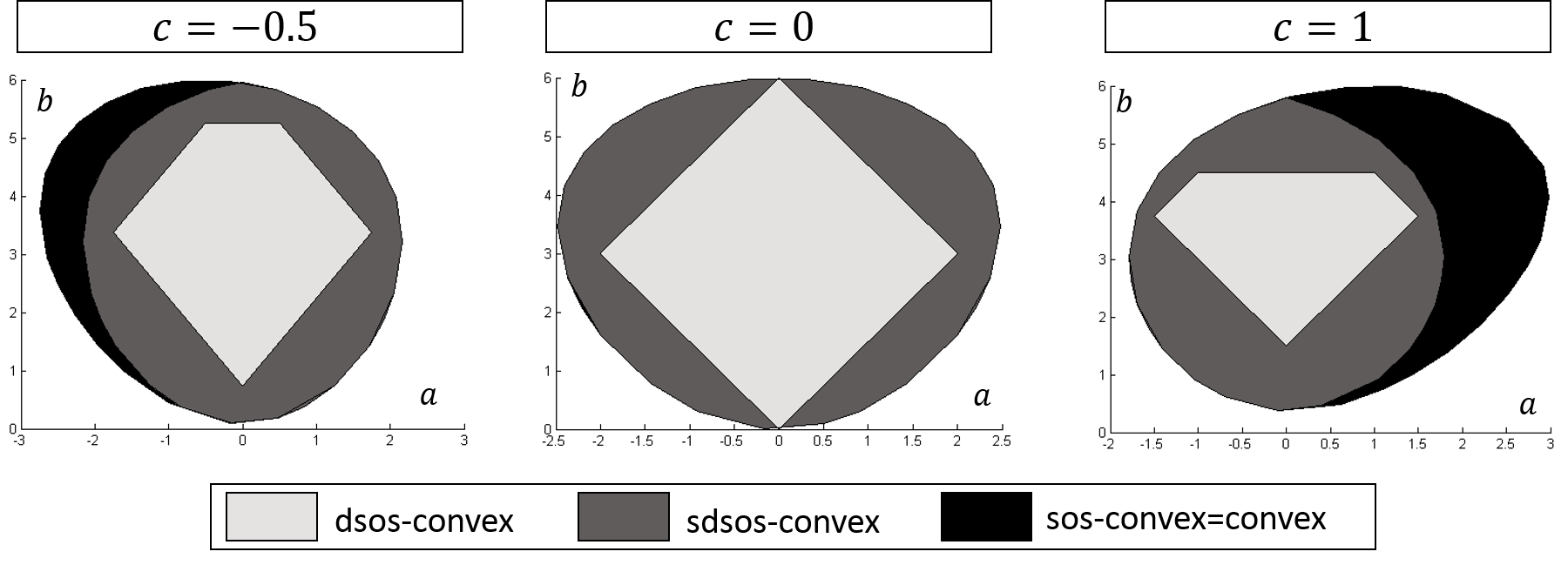}
\caption{The sets $\Sigma_DC_{n,2d}, \Sigma_SC_{n,2d}$ and $\Sigma C_{n,2d}$ for the parametric family of polynomials in (\ref{eq:parampoly})}
\label{fig:cones}
\end{figure}

\subsection{Existence of difference of s/d/sos-convex decompositions of polynomials} \label{subsec:diffconv}
%
%
%
%
%
%

 The reason we introduced the notions of s/d/sos-convexity is that in our optimization problems for finding dcds, we would like to replace the condition $$f=g-h,~ g,h \text{ convex}$$ with the computationally tractable condition $$f=g-h, ~g,h \text{ s/d/sos-convex.}$$ 
The first question that needs to be addressed is whether for any polynomial such a decomposition exists. In this section, we prove that this is indeed the case. This in particular implies that a dcd can be found efficiently. 


We start by proving a lemma about cones. 

\begin{lemma}\label{lemma:cones}
Consider a vector space $E$ and a full-dimensional cone $K \subseteq E$. Then, any $v \in E$ can be written as $v=k_1-k_2,$ where $k_1,k_2 \in K.$
\end{lemma}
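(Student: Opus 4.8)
The plan is to exploit full-dimensionality of $K$ to find an interior point, and then use the fact that a cone absorbs any direction once we move from an interior point. Concretely, since $K$ is full-dimensional, it has nonempty interior, so I would fix some $k_0 \in \operatorname{int}(K)$. The key geometric observation is that for an interior point of a cone, a sufficiently small ball around $k_0$ lies in $K$, and by the cone property (closure under multiplication by positive scalars) the point $k_0 + t\,w$ lies in $K$ for every direction $w$ once $t$ is scaled appropriately.

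Here are the steps I would carry out, in order. First, I would take $k_0 \in \operatorname{int}(K)$ and let $B(k_0,\epsilon) \subseteq K$ for some $\epsilon > 0$. Second, given an arbitrary $v \in E$, I would consider the point $k_0 - \lambda v$ for a small scalar $\lambda > 0$; by choosing $\lambda$ small enough that $\|\lambda v\| < \epsilon$, the point $k_0 - \lambda v$ lies in $B(k_0,\epsilon) \subseteq K$. Third, I would set $k_2 \mathrel{\mathop:}= \frac{1}{\lambda}(k_0 - \lambda v) = \frac{1}{\lambda}k_0 - v$, which is in $K$ because $K$ is closed under multiplication by the positive scalar $\frac{1}{\lambda}$. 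Fourth, I would set $k_1 \mathrel{\mathop:}= \frac{1}{\lambda}k_0$, which is again in $K$ by the same scaling property. Then $k_1 - k_2 = \frac{1}{\lambda}k_0 - (\frac{1}{\lambda}k_0 - v) = v$, giving the desired decomposition with $k_1, k_2 \in K$.

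The main subtlety to get right is the precise definition of ``cone'' being assumed: I would want $K$ to be closed under multiplication by nonnegative (or positive) scalars, which is what the scaling argument uses, and full-dimensionality is exactly what guarantees the nonempty interior needed to produce the absorbing ball. No convexity or closedness of $K$ is actually required for this argument, only the scaling property and full-dimensionality, so I would state explicitly that these are the only properties invoked. I do not anticipate a serious obstacle here; the only care needed is in the bookkeeping of the scalar $\lambda$ and in verifying that both $k_1$ and $k_2$ genuinely land back in $K$ under scaling.
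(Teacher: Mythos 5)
Your proof is correct and is essentially the same argument as the paper's: both exploit full-dimensionality to fix an interior point, observe that a small perturbation of it (yours by $-\lambda v$, the paper's via the combination $(1-\alpha)v+\alpha k$ with $\alpha$ near $1$) stays in $K$, and then scale up by a positive scalar to recover $v$ as a difference of two cone elements. The only cosmetic difference is that the paper places $v$ inside $k_1$ while you place $-v$ inside $k_2$, and your version avoids the paper's separate treatment of the case $v\in K$.
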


\begin{proof}
Let $v \in E$. If $v \in K$, then we take $k_1=v$ and $k_2=0$. Assume now that $v \notin K$ and let $k$ be any element in the interior of the cone $K$. As $k \in int(K)$, there exists $0<\alpha<1$ such that $k'\mathrel{\mathop{:}}=(1-\alpha)v+\alpha k \in K.$ Rewriting the previous equation, we obtain $$v=\frac{1}{1-\alpha}k'-\frac{\alpha}{1-\alpha}k.$$ By taking $k_1\mathrel{\mathop{:}}=\frac{1}{1-\alpha}k'$ and $k_2\mathrel{\mathop{:}}=\frac{\alpha}{1-\alpha}k$, we observe that $v=k_1-k_2$ and $k_1,k_2 \in K$. \qed
\end{proof}

The following theorem is the main result of the section. 
\begin{theorem}\label{thm:diff.dsos}
Any polynomal $p \in \tilde{\mathcal{H}}_{n,2d}$ can be written as the difference of two dsos-convex polynomials in $\tilde{\mathcal{H}}_{n,2d}.$
\end{theorem}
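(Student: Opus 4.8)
The plan is to deduce the statement directly from the cone decomposition Lemma~\ref{lemma:cones}. Take the ambient vector space to be $E=\tilde{\mathcal{H}}_{n,2d}$ and let $K=\tilde{\Sigma}_DC_{n,2d}$ be the set of dsos-convex polynomials in $\tilde{\mathcal{H}}_{n,2d}$. If $K$ is a full-dimensional cone in $E$, then Lemma~\ref{lemma:cones} instantly produces, for every $p\in\tilde{\mathcal{H}}_{n,2d}$, a representation $p=k_1-k_2$ with $k_1,k_2\in K$, which is exactly a difference of two dsos-convex polynomials. So the entire proof reduces to verifying (a) that $K$ is a cone and (b) that $K$ is full-dimensional in $E$.

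Claim (a) is routine. Since $p\mapsto H_p$ is linear, for $\lambda\ge 0$ we have $y^TH_{\lambda p}(x)y=\lambda\, y^TH_p(x)y$, and a nonnegative multiple of a dd Gram matrix is again dd; likewise $y^TH_{p+q}(x)y=y^TH_p(x)y+y^TH_q(x)y$ and a sum of dd matrices is dd. Hence $K$ is closed under nonnegative linear combinations, i.e. it is a convex cone.

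Claim (b) is the crux, and the natural route is to produce an interior point, i.e. a \emph{strictly} dsos-convex polynomial $p_0$. Let $z$ be the vector of monomials $y_i x^\beta$ with $1\le i\le n$ and $|\beta|\le d-1$. Any Hessian form $y^TH_p(x)y$ (quadratic in $y$, of $x$-degree at most $2d-2$) can be written as $z^TQz$ for some symmetric $Q$, because every monomial $x^\gamma y_iy_j$ occurring in it has $|\gamma|\le 2d-2=2(d-1)$ and therefore factors as a product of two entries of $z$. I would then seek $p_0$ with $y^TH_{p_0}(x)y=z^TQ_0z$ for a \emph{strictly} diagonally dominant $Q_0$. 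Granting such a $p_0$, full-dimensionality follows by an openness argument: fix a linear right-inverse $\Psi$ of the surjection $Q\mapsto z^TQz$; then any nearby $p$ satisfies $y^TH_p(x)y=z^T(Q_0+R)z$ with $R=\Psi\big(y^TH_{p-p_0}(x)y\big)$ depending linearly on $p-p_0$, so $R\to 0$ as $p\to p_0$. Since strict diagonal dominance is an open condition, $Q_0+R$ stays dd, so an entire neighborhood of $p_0$ lies in $K$; this makes $K$ full-dimensional.

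The main obstacle is thus the explicit construction of the strictly dsos-convex $p_0$, and this is the only genuinely computational part. Naive guesses fail: for $p_0=\sum_i x_i^{2d}$ one gets $y^TH_{p_0}(x)y=2d(2d-1)\sum_i (x_i^{d-1}y_i)^2$, whose Gram matrix is diagonal but has many zero diagonal entries, placing $p_0$ on the boundary rather than the interior; in particular the lowest-degree diagonal entries indexed by $y_i^2$ (the $|\beta|=0$ block) are not activated at all. A promising fix is a positive combination of even power sums, $p_0(x)=\sum_{k=1}^{d}\gamma_k\big(\sum_{i=1}^n x_i^2\big)^k$, whose purely even structure forces every diagonal monomial $y_i^2x^{2\beta}$ with $|\beta|\le d-1$ to appear with positive coefficient (the term of degree $2(|\beta|+1)$ contributes $x^{2\beta}$ to $\partial^2 p_0/\partial x_i^2$), while the low-order terms activate the $y_i^2$ block with no off-diagonal cost. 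What remains is to show that the off-diagonal entries coming from the mixed partials $\partial^2 p_0/\partial x_i\partial x_j$ can be strictly dominated, exploiting the freedom to distribute each monomial's coefficient among the several Gram entries that produce it; this diagonal-dominance estimate is the technical heart of the argument. Once it is in place, Lemma~\ref{lemma:cones} closes the proof, and one obtains as a corollary (Corollary~\ref{cor:SDSOSetc}) that every polynomial admits a dsos-convex, hence convex, dcd.
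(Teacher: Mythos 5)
Your overall strategy is exactly the paper's: apply Lemma~\ref{lemma:cones} with $E=\tilde{\mathcal{H}}_{n,2d}$ and $K=\tilde{\Sigma}_DC_{n,2d}$, and establish full-dimensionality of $K$ by exhibiting a polynomial whose Hessian form admits a strictly diagonally dominant Gram matrix (this is precisely the paper's Theorem~\ref{th:mainth}; your right-inverse openness argument is a clean version of the paper's perturbation remark, and your claim (a) is fine). The gap is the one you yourself flag: the construction of such a polynomial, ``the technical heart,'' is left open. This is not a routine verification omitted for brevity --- it is essentially all of the work, and it occupies Lemma~\ref{lemma:homogn2}, Lemma~\ref{lemma:induction}, and the proof of Theorem~\ref{th:mainth} in the paper.

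Worse, the specific candidate you propose, $p_0(x)=\sum_{k=1}^{d}\gamma_k\bigl(\sum_{i=1}^n x_i^2\bigr)^k$ with $\gamma_k>0$, provably cannot work for all $n,d$. The obstruction is a grading argument: if a polynomial is dsos, i.e., equals $\sum_k \lambda_k q_k^2$ with $\lambda_k\ge 0$ and each $q_k$ a monomial or a sum/difference of two monomials (which is what a dd Gram matrix yields), then its top-degree homogeneous component equals $\sum_k \lambda_k \bigl(q_k^{\mathrm{top}}\bigr)^2$ and is therefore itself dsos. Applied to $y^TH_{p_0}(x)y$, whose top homogeneous component is $\gamma_d\, y^TH_{(\sum_i x_i^2)^d}(x)y$, this shows that dsos-convexity of $p_0$ would force $\bigl(\sum_i x_i^2\bigr)^d$ to be dsos-convex. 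But as the paper notes in the remark following the proof of Theorem~\ref{th:mainth}, $\bigl(\sum_i x_i^2\bigr)^d$ is \emph{not} dsos-convex (nor sdsos-convex) for all $n,d$; for instance it fails for $n=3$, $2d=8$. Hence no choice of the positive weights $\gamma_k$ can help: the lower-degree terms cannot repair a failure that already occurs in the top degree, since the Gram entries available to represent the top-degree part of the Hessian form are decoupled from those representing the lower-degree parts. This is exactly why the paper's construction is as elaborate as it is: an explicit bivariate homogeneous polynomial with carefully tuned coefficients (Lemma~\ref{lemma:homogn2}), an induction on the number of variables that patches together copies of it plus a small correction term (Lemma~\ref{lemma:induction}), and finally a sum over the degrees $2k$, $k=1,\ldots,d$, to produce the non-homogeneous interior point.
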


\begin{corollary}\label{cor:SDSOSetc}
Any polynomial $p \in \tilde{\mathcal{H}}_{n,2d}$ can be written as the difference of two sdsos-convex, sos-convex, or convex polynomials in $\tilde{\mathcal{H}}_{n,2d}$.
\end{corollary}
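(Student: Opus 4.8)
The plan is to read off the corollary from Theorem~\ref{thm:diff.dsos}, which I am free to assume, together with the nesting of the three refined convexity notions inside ordinary convexity. The inclusions (\ref{eq:inclusion.cones}), in their stated analogue for the polynomial (tilde) cones, give
$$\tilde{\Sigma}_DC_{n,2d} \subseteq \tilde{\Sigma}_SC_{n,2d} \subseteq \tilde{\Sigma} C_{n,2d} \subseteq \tilde{C}_{n,2d},$$
so every dsos-convex polynomial is at once sdsos-convex, sos-convex, and convex. Hence, given $p \in \tilde{\mathcal{H}}_{n,2d}$, I apply Theorem~\ref{thm:diff.dsos} to write $p = g - h$ with $g,h$ dsos-convex; the very same $g$ and $h$ then qualify as sdsos-convex, as sos-convex, and as convex, so $p$ is simultaneously a difference of two sdsos-convex, of two sos-convex, and of two convex polynomials. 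This is the entire content of the corollary, and it requires no estimation or construction of its own.

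Consequently the real obstacle sits one level up, in Theorem~\ref{thm:diff.dsos} itself, and since the corollary rests on it I would establish it as follows. I apply Lemma~\ref{lemma:cones} with ambient space $E = \tilde{\mathcal{H}}_{n,2d}$ and cone $K = \tilde{\Sigma}_DC_{n,2d}$. That $K$ is a convex cone is immediate: the Hessian map $p \mapsto H_p$ is linear in the coefficients of $p$, and the dd matrices form a convex cone, so the property that $y^TH_py$ is dsos is preserved under addition and nonnegative scaling of $p$. Lemma~\ref{lemma:cones} then writes every $p \in E$ as $k_1 - k_2$ with $k_1,k_2 \in K$, which is exactly the theorem — provided $K$ is full-dimensional in $E$, i.e. has nonempty interior.

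Showing that $K$ has nonempty interior is the genuinely delicate step, and the natural route is to exhibit a concrete $p_0$ lying inside it. Here $p_0$ is dsos-convex precisely when $y^TH_{p_0}(x)y$ admits a diagonally dominant Gram matrix $Q_0$ in the monomial basis $w(x,y)$ formed by the entries $x^\alpha y_i$ with $|\alpha|\le d-1$, and $p_0$ will be \emph{interior} as soon as this $Q_0$ can be taken \emph{strictly} diagonally dominant: strict diagonal dominance is an open condition, and any nearby polynomial $p_0 + \epsilon q$ changes the form by $\epsilon\, y^TH_q(x)y$, which lies in the span of the pairwise products $w_kw_l$ and so can be absorbed by a small symmetric adjustment of $Q_0$ that leaves it dd. The crux is therefore to build a $p_0$ whose Hessian form carries a strictly positive coefficient on \emph{every} square $x^{2\alpha}y_i^2$ — this forces the full diagonal of $Q_0$ to be strictly positive — while keeping the unavoidable off-diagonal cross terms (coming from the mixed partials $\partial^2 p_0/\partial x_i \partial x_j$) dominated. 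A sum of even monomials such as $p_0 = \sum_{1 \le |\gamma| \le d} x^{2\gamma}$, with the coefficients and the freedom in the Gram representation tuned so that the pure high powers dominate, is the candidate I would verify; checking the resulting diagonal-dominance inequalities is the one computational step the argument really needs.
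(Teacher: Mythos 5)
Your proof of the corollary itself is correct and is exactly the paper's: invoke Theorem~\ref{thm:diff.dsos} to write $p=g-h$ with $g,h$ dsos-convex, then use the inclusions $\tilde{\Sigma}_DC_{n,2d}\subseteq\tilde{\Sigma}_SC_{n,2d}\subseteq\tilde{\Sigma}C_{n,2d}\subseteq\tilde{C}_{n,2d}$ to conclude that the same pair $(g,h)$ works in all three cases. Nothing more is needed, and the paper does nothing more.

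A caveat on your supplementary sketch of Theorem~\ref{thm:diff.dsos} (which you were entitled to assume): the strategy you outline --- Lemma~\ref{lemma:cones} applied to $K=\tilde{\Sigma}_DC_{n,2d}$, with full-dimensionality certified by a polynomial whose Hessian form has a \emph{strictly} dd Gram matrix --- is indeed the paper's, but your proposed interior point is not merely ``one computational step'' away from working; as written it fails. Take $2d=4$ and the equal-coefficient candidate $p_0=\sum_i x_i^2+\sum_i x_i^4+\sum_{i<j}x_i^2x_j^2$. In the basis with entries $y_i$ and $x_jy_i$, the coefficient of $x_j^2y_i^2$ ($j\neq i$) forces the diagonal entry of $x_jy_i$ to equal $2$, and each cross term $8x_ix_jy_iy_j$ must be split as $2Q_{(x_iy_i),(x_jy_j)}+2Q_{(x_jy_i),(x_iy_j)}=8$, these being the only products producing that monomial. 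Strict dominance in the row of $x_jy_i$ forces $\bigl|Q_{(x_jy_i),(x_iy_j)}\bigr|<2$, hence $Q_{(x_iy_i),(x_jy_j)}>2$ for every $j\neq i$, and the row of $x_iy_i$ (diagonal $12$) then requires $2(n-1)<12$, impossible for $n\geq 7$. So the coefficient tuning you defer is the entire difficulty, and it is exactly what the paper's long construction does: Lemma~\ref{lemma:homogn2} builds a bivariate form with a carefully chosen coefficient sequence, Lemma~\ref{lemma:induction} propagates it to $n+1$ variables by summing the $n$-variable forms over all variable subsets and adding only an $\alpha$-small multiple of the all-variable even monomials, and Theorem~\ref{th:mainth} stacks the homogeneous pieces. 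The paper's remark that $\bigl(\sum_i x_i^2\bigr)^d$ is not even dsos-convex for $n=3$, $2d=8$ is a further warning that natural candidates for this interior point genuinely fail.
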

\begin{proof}
This is straightforward from the inclusions $$\tilde{\Sigma}_DC_{n,2d} \subseteq \tilde{\Sigma}_SC_{n,2d} \subseteq \tilde{\Sigma}C_{n,2d} \subseteq \tilde{C}_{n,2d}.\ \ \qed$$
\end{proof}
In view of Lemma \ref{lemma:cones}, it suffices to show that $\tilde{\Sigma}_DC_{n,2d}$ is full dimensional in the vector space $\tilde{\mathcal{H}}_{n,2d}$ to prove Theorem \ref{thm:diff.dsos}. We do this by constructing a polynomial in $int(\tilde{\Sigma}_DC_{n,2d})$ for any $n,d$.


Recall that $z_{n,d}$ (resp. $\tilde{z}_{n,d}$) denotes the vector of all monomials in $x=(x_1,\ldots,x_n)$ of degree exactly (resp. up to) $d$. If $y=(y_1,\ldots,y_n)$ is a vector of variables of length $n$, we define $$w_{n,d}(x,y) \mathrel{\mathop{:}}= y \cdot z_{n,d}(x),$$
where $y \cdot z_{n,d}(x)=(y_1 z_{n,d}(x), \ldots, y_n  z_{n,d}(x))^T.$ 
%
%
Analogously, we define $$\tilde{w}_{n,d}(x,y)\mathrel{\mathop{:}}=y \cdot \tilde{z}_{n,d}(x).$$

\begin{theorem} \label{th:mainth}
For all $n,d$, there exists a polynomial $p \in \tilde{\mathcal{H}}_{n,2d}$ such that 
\begin{align}\label{eq:mainth}
y^TH_p(x)y=\tilde{w}^T_{n,d-1}(x,y)Q\tilde{w}_{n,d-1}(x,y),
\end{align} where $Q$ is strictly dd.
\end{theorem}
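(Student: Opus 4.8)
The plan is to prove the theorem by exhibiting an explicit polynomial $p$ together with a valid strictly diagonally dominant Gram matrix $Q$ for its Hessian form $y^TH_p(x)y$ in the basis $\tilde{w}_{n,d-1}(x,y)$; this is exactly the interior certificate needed to conclude (via the perturbation argument below together with Lemma \ref{lemma:cones}) that $\tilde{\Sigma}_DC_{n,2d}$ is full-dimensional, and hence to obtain Theorem \ref{thm:diff.dsos}. I would take the symmetric candidate
$$p(x) = \sum_{k=1}^{d} c_k\, r^k, \qquad r := \sum_{i=1}^n x_i^2, \quad c_k > 0,$$
which lies in $\tilde{\mathcal{H}}_{n,2d}$. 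A direct computation gives the Hessian form of each summand,
$$y^TH_{r^k}(x)y = 2k\, r^{k-1}\|y\|^2 + 4k(k-1)\, r^{k-2}\langle x,y\rangle^2,$$
where $\langle x,y\rangle = \sum_i x_iy_i$. The crucial structural observation is that the $k$-th summand is homogeneous of degree $2(k-1)$ in $x$, so its Hessian form is expressible using only the entries of $\tilde{w}_{n,d-1}$ of $x$-degree exactly $k-1$. Consequently $y^TH_p(x)y$ admits a Gram matrix $Q$ that is \emph{block diagonal} across the degree strata $m=0,1,\ldots,d-1$, and since a block-diagonal matrix is strictly dd if and only if each block is, it suffices to produce a strictly dd Gram block for $y^TH_{r^k}(x)y$ in the basis of monomials $y_ax^\kappa$ with $|\kappa|=k-1$.

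For a fixed $k$, I would split the two terms. Expanding $r^{k-1}\|y\|^2 = \sum_{a}\sum_{|\kappa|=k-1}\binom{k-1}{\kappa}(y_ax^\kappa)^2$ shows that the first term already supplies a positive diagonal entry, namely $2k\binom{k-1}{\alpha}$, to \emph{every} monomial $y_ax^\alpha$ of the stratum; this is what guarantees that no diagonal entry of $Q$ is forced to be zero. The second term $4k(k-1)\,r^{k-2}\langle x,y\rangle^2$ is a sum of squares of linear forms in $y$ and is responsible for all of the off-diagonal entries $Q_{(a,\alpha),(b,\beta)}$ with $a\neq b$. Here I would exploit the non-uniqueness of the Gram representation: a monomial such as $y_ay_bx^\gamma$ factors as a product of $\tilde{w}$-entries in several ways (e.g. $(y_ax^\alpha)(y_bx^\beta)$ for the various $\alpha+\beta=\gamma$), so its coefficient may be \emph{redistributed} among the corresponding off-diagonal entries of $Q$. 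The idea is to load this off-diagonal mass onto rows indexed by the ``aligned'' monomials $y_ax^\kappa$ with $\kappa_a$ large, whose diagonal entries collect contributions from both terms and are therefore biggest, while keeping the off-diagonal entries in rows with small diagonal near zero.

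The main obstacle is precisely this last bookkeeping: verifying that a single distribution rule makes the diagonal strictly dominate the off-diagonal row sum \emph{simultaneously for every row} and for all $(n,d)$. Concretely, for each row $(a,\alpha)$ I would bound the total off-diagonal mass assigned to it and compare it against the guaranteed diagonal entry $2k\binom{k-1}{\alpha}$ from the first term; small cases (e.g. $n=2,\,k=2$, where routing all cross terms onto the $y_1x_1,\,y_2x_2$ pair yields a strictly dd block with diagonal $(12,4,4,12)$ and a single off-diagonal entry $8$) show that ample slack is available, and the general estimate should follow from multinomial-coefficient bounds. Once strict diagonal dominance of $Q$ is established, the loop closes: for any $q\in\tilde{\mathcal{H}}_{n,2d}$ the form $y^TH_q(x)y$ has \emph{some} Gram matrix $Q_q$ in the same basis, whence $Q+\epsilon Q_q$ remains dd for small $\epsilon$, so $p\in int(\tilde{\Sigma}_DC_{n,2d})$, which with Lemma \ref{lemma:cones} delivers the difference-of-dsos-convex decomposition asserted in Theorem \ref{thm:diff.dsos}.
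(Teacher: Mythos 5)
Your overall skeleton---construct one explicit $p$ with a strictly dd Gram matrix, stack the homogeneous strata block-diagonally, then get interiority by perturbation and conclude via Lemma \ref{lemma:cones}---is the same as the paper's. But the concrete candidate you propose cannot work, and the step you defer (``the general estimate should follow from multinomial-coefficient bounds'') is precisely where it fails. Your top-degree stratum is $c_d\left(\sum_i x_i^2\right)^d$, and a strictly dd Gram block for its Hessian form would certify that $\left(\sum_i x_i^2\right)^d$ is dsos-convex (the positive scaling $c_d$ is irrelevant). The paper notes, in the remark following Theorem \ref{th:mainth}, that this form is \emph{not} dsos-convex (nor sdsos-convex) for all $n,d$; e.g., it fails for $n=3$, $2d=8$. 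Nor can you escape by allowing non-block-diagonal Gram matrices in the full basis $\tilde{w}_{n,d-1}$: monomials of $x$-degree $2(d-1)$ can only arise as products of two basis elements of $x$-degree exactly $d-1$, so the principal submatrix of any Gram matrix $Q$ indexed by those basis elements is itself a Gram matrix of $c_d\,y^TH_{r^d}(x)y$; and a principal submatrix of a strictly dd matrix is strictly dd. Hence if your $p$ satisfied the conclusion of the theorem, the form $r^d$ would be dsos-convex for every $n,d$ --- a contradiction. Your $n=2$, $k=2$ sanity check is misleading: that is exactly the regime where everything works; as $n$ and $k$ grow, the off-diagonal mass coming from $4k(k-1)r^{k-2}\langle x,y\rangle^2$ (which involves all pairs $y_ay_b$, $a\neq b$) overwhelms the diagonal supplied by $2kr^{k-1}\|y\|^2$, and no redistribution of Gram mass can repair this.

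This obstruction is why the paper's proof is as involved as it is. In the bivariate homogeneous case (Lemma \ref{lemma:homogn2}) it uses, roughly, a rescaling of $(x_1^2+x_2^2)^d$ whose extreme coefficients ($a_0$) are deliberately boosted so that the corresponding rows become dominant; Lemma \ref{lemma:induction} then passes from $n$ to $n+1$ variables by summing copies $p_{n,2d}(x_{i_1},\ldots,x_{i_n})$ over all choices of $n$ of the $n+1$ variables and adding a small multiple $\alpha v$ of the form $v$ consisting of the even monomials in all $n+1$ variables: the copies give strict dominance on rows indexed by monomials involving at most $n$ of the variables, while $\alpha v$ manufactures the otherwise absent positive diagonal entries on rows indexed by monomials containing all $n+1$ variables, with off-diagonal contamination that can be made arbitrarily small by shrinking $\alpha$. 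Your final step (summing homogeneous pieces over $k$ to get a block-diagonal strictly dd $Q$, then perturbing to conclude $p\in int(\tilde{\Sigma}_DC_{n,2d})$) matches the paper's proof of Theorem \ref{th:mainth}; what is missing is a valid construction of each homogeneous block, and that is the actual content of the theorem.
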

Any such polynomial will be in $int(\tilde{\Sigma}_D C_{n,2d})$. Indeed, if we were to pertub the coefficients of $p$ slightly, then each coefficient of $Q$ would undergo a slight perturbation. As $Q$ is strictly dd, $Q$ would remain dd, and hence $p$ would remain dsos-convex. 

We will prove Theorem \ref{th:mainth} through a series of lemmas. First, we show that this is true in the homogeneous case and when $n=2$ (Lemma \ref{lemma:homogn2}). By induction, we prove that this result still holds in the homogeneous case for any $n$ (Lemma \ref{lemma:induction}). We then extend this result to the nonhomogeneous case.

\begin{lemma} \label{lemma:homogn2}
For all $d$, there exists a polynomial $p \in \tilde{\mathcal{H}}_{2,2d}$ such that 
\begin{align}\label{eq:DSOS.Conv.int}
y^TH_p(x)y=w^T_{2,d-1}(x,y)Qw_{2,d-1}(x,y), 
\end{align}
for some strictly dd matrix $Q$.
\end{lemma}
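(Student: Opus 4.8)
The plan is to prove the lemma by exhibiting an explicit biform $p$ together with a valid strictly diagonally dominant Gram matrix $Q$; this is exactly what identity (\ref{eq:DSOS.Conv.int}) demands. I would search for $p$ among the ``even'' biforms
$p(x)=\sum_{i=0}^{d} a_i\, x_1^{2i}x_2^{2(d-i)}$ with coefficients $a_i>0$ to be fixed later, and write $y^{T}H_p(x)y=(\partial_{11}p)\,y_1^2+2(\partial_{12}p)\,y_1y_2+(\partial_{22}p)\,y_2^2$. The reason for this restriction is structural: $\partial_{11}p$ and $\partial_{22}p$ contain only monomials with even exponents in $x_1,x_2$, while $\partial_{12}p$ contains only monomials that are odd in both. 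In the basis $w_{2,d-1}(x,y)$, whose entries are $y_k\,x_1^{d-1-j}x_2^{j}$ with $k\in\{1,2\}$ and $0\le j\le d-1$, this means the $y_1^2$- and $y_2^2$-parts can be absorbed \emph{entirely} into the squares $(y_1 x_1^{d-1-j}x_2^{j})^2$ and $(y_2 x_1^{d-1-j}x_2^{j})^2$, i.e.\ into the diagonal of $Q$, while the $y_1y_2$-part only couples a $y_1$-monomial to a $y_2$-monomial. Thus $Q$ may be taken in block form $\left(\begin{smallmatrix}D_1&B\\ B^{T}&D_2\end{smallmatrix}\right)$ with $D_1,D_2$ diagonal and $B$ the sole source of off-diagonal entries.

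A short computation gives the diagonal entry attached to $y_1 x_1^{d-1-j}x_2^{j}$ as $2(d-j)(2(d-j)-1)\,a_{d-j}$ and the one attached to $y_2 x_1^{d-1-j}x_2^{j}$ as $2(j+1)(2j+1)\,a_{d-1-j}$, so every diagonal entry is positive exactly when $a_0,\dots,a_d>0$ (which also shows all $a_i$ are genuinely needed). The mixed part is $2(\partial_{12}p)y_1y_2=\sum_{i=1}^{d-1}8i(d-i)a_i\, x_1^{2i-1}x_2^{2(d-i)-1}y_1y_2$, and for each $i$ the monomial $x_1^{2i-1}x_2^{2(d-i)-1}y_1y_2$ equals $(y_1 x_1^{d-1-j}x_2^{j})(y_2 x_1^{d-1-j'}x_2^{j'})$ for any pair with $j+j'=2(d-i)-1$; we are free to split its coefficient among these pairs. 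The crux is to choose the $a_i$ and this splitting so that $Q$ is strictly diagonally dominant. I would route the $i$-th mixed monomial to a \emph{single} pair anchored at a high-capacity row: for $2i\le d$ to $(j,j')=(d-2i,\,d-1)$, and for $2i\ge d$ to $(j,j')=(0,\,2(d-i)-1)$. Then the two ``corner'' rows $y_1 x_1^{d-1}$ and $y_2 x_2^{d-1}$ (whose diagonals involve $a_d$ and $a_0$, and which carry \emph{no} mixed mass since $i=0,d$ give zero coefficient) absorb whole sums of masses, while every other row receives at most one mixed term. Taking the $a_i$ symmetric ($a_k=a_{d-k}$), the dominance inequalities collapse to the chain $(4i-1)a_{2i}>(d-i)a_i$ for $1\le i<d/2$, together with a single inequality bounding a sum of mixed masses by $2d(2d-1)a_0$. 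The first is satisfiable by fixing the $a_i$ from small to large indices, and the second by finally taking $a_0=a_d$ large, which is harmless precisely because the corners generate no mixed term.

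The main obstacle is exactly this balancing step, and the subtlety worth flagging is that it genuinely requires the asymmetric routing above. The naive ``self-pairing'' choice, sending the $i$-th mixed monomial to $(d-i,\,d-1-i)$, makes both endpoint diagonals proportional to the same $a_i$ that scales the mixed mass; the factor $a_i$ then cancels and the dominance condition reduces to $2i-1>2(d-i)$ (and its mirror), which fails for a constant fraction of the rows \emph{no matter how the $a_i$ are chosen}. Routing instead to pairs whose diagonal capacities are governed by \emph{different}, freely enlargeable coefficients is what breaks this coupling and makes strict dominance attainable. Once a concrete admissible sequence $(a_i)$ is written down (for instance, all $a_i=1$ already works for small $d$, with the corners handling the bulk of the mixed mass), the resulting $Q$ is strictly diagonally dominant and satisfies (\ref{eq:DSOS.Conv.int}), completing the proof.
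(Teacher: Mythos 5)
Your setup is correct and, in fact, coincides with the paper's starting point: the even symmetric biform, the block Gram structure $\bigl(\begin{smallmatrix}D_1&B\\ B^T&D_2\end{smallmatrix}\bigr)$, the diagonal entries $2(d-j)(2(d-j)-1)a_{d-j}$ and $2(j+1)(2j+1)a_{d-1-j}$, the mixed coefficients $8i(d-i)a_i$, and the reduction (under $a_k=a_{d-k}$) to the chain $(4i-1)a_{2i}>(d-i)a_i$ plus one corner inequality are all right. The gap is the final claim that this chain is ``satisfiable by fixing the $a_i$ from small to large indices.'' Under your symmetry, $a_{2i}$ really means $a_{\min(2i,\,d-2i)}$, so the constraint for index $i$ points to $\sigma(2i)\mathrel{\mathop:}=\min(2i,d-2i)$, which can be \emph{smaller} than $i$: the constraints follow the doubling map modulo folding, whose functional graph has cycles, and feasibility of such a system forces the product of the ratios $\tfrac{d-i}{4i-1}$ around every cycle to be less than $1$. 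This fails, e.g., for $d=15$: the constraints for $i=1,2,4,7$ read $3a_2>14a_1$, $7a_4>13a_2$, $15a_8=15a_7>11a_4$, $27a_{14}=27a_1>8a_7$, and multiplying them yields $1>\tfrac{14\cdot13\cdot11\cdot8}{3\cdot7\cdot15\cdot27}\approx 1.88$, a contradiction. Dropping the symmetry does not rescue the routing: the unsymmetrized system contains the cycle $1\to2\to4\to8\to1$ of doubling mod $15$ with the same product. So for $d=15$ \emph{no} choice of positive coefficients makes your $Q$ strictly dd; the flaw is in the routing itself, not in the tuning of the $a_i$. (Also, ``all $a_i=1$ works for small $d$'' only holds for $d\le 3$, since $i=1$ requires $3>d-1$.)

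The paper avoids exactly this trap with a different routing. It sends the mixed monomial $x_1^{2d-2k-1}x_2^{2k-1}y_1y_2$ (mass proportional to $a_k$) to the \emph{off-by-one adjacent} pair coupling $y_1x_1^{d-k}x_2^{k-1}$ with $y_2x_1^{d-1-k}x_2^{k}$, whose diagonals are proportional to $a_{k-1}$ and $a_{k+1}$ respectively; only the middle monomial $k=d/2$ goes to the corner pair $(y_1x_1^{d-1},\,y_2x_2^{d-1})$ and is absorbed by enlarging $a_0$, which is your corner-anchoring idea but used for a single term. This adjacent routing evades both the self-pairing cancellation you correctly identified (it is $(k-1,k)$, not $(k,k-1)$, so no common factor $a_k$ cancels) and the doubling-map cycles: the dominance constraints link only consecutive indices $k-1,k,k+1$, so the constraint graph is a path, which is always feasible; the paper's recursion $a_{k+1}=\tfrac{2d-2k}{2k+2}a_k$ (binomial-type coefficients) satisfies $\beta_k>\delta_{k+1}$ and $\gamma_{k+1}>\delta_k$ with room to spare. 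To repair your proof you would need to replace the corner-anchored single-pair routing of the whole middle range by such an adjacent (or otherwise split) routing; as written, the construction fails for $d=15$.
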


{\gh We remind the reader that Lemma \ref{lemma:homogn2} corresponds to the base case of a proof by induction on $n$ for Theorem \ref{th:mainth}.}

\begin{proof}
{\gh In this proof, we show that there exists a polynomial $p$ that satisfies (\ref{eq:DSOS.Conv.int}) for some strictly dd matrix $Q$ in the case where $n=2$, and for any $d\geq 1.$}

First, if $2d=2$, we simply take $p(x_1,x_2)=x_1^2+x_2^2$ as $y^TH_p(x)y=2y^T I y$ {\gh and the identity matrix is strictly dd}. Now, assume $2d>2$.  We consider two cases depending on whether $d$ is divisible by $2$.

In the case that it is, we construct $p$ as
\scalefont{0.7}
\begin{align*}
p(x_1,x_2) \mathrel{\mathop{:}}=a_0x_1^{2d}+a_1x_1^{2d-2}x_2^2+a_2x_1^{2d-4}x_2^4+\ldots+a_{d/2}x_1^{d}x_2^{d}+\ldots+a_1x_1^2x_2^{2d-2}+a_0x_2^{2d},
\end{align*}
\normalsize
with the sequence $\{a_k\}_{k=0,\ldots,\frac{d}{2}}$ defined as follows
\begin{equation}\label{eq:ak.even}
\begin{aligned}
a_1&=1\\
a_{k+1}&=\left( \frac{2d-2k}{2k+2}\right)a_k,~ k=1,\ldots, \frac{d}{2}-1 \text{ (for } 2d>4)\\
a_0&=\frac{1}{d}+\frac{d}{2(2d-1)}a_{\frac{d}{2}}.
\end{aligned}
\end{equation}


Let 
\begin{equation}\label{eq:def.beta.gamma.delta}
\begin{aligned}
\beta_k&=a_k(2d-2k)(2d-2k-1), k=0,\ldots,\frac{d}{2}-1,\\
\gamma_k&=a_k\cdot 2k(2k-1), k=1,\ldots,\frac{d}{2},\\
\delta_k&=a_k(2d-2k)\cdot 2k, k=1,\ldots,\frac{d}{2}.
\end{aligned}
\end{equation}
We claim that the matrix $Q$ defined as  
\setcounter{MaxMatrixCols}{29}
\begin{align*}
\scalefont{0.65}
\begin{pmatrix}
\beta_0 & & & & & &  &  & &  & & \vline & 0 & \delta_1 & & & & & & & & &\delta_{\frac{d}{2}}\\
& \ddots & & & & &  & & & & & \vline &  & \ddots & \ddots & &  & & & & & &\\
 &  & \beta_{k} & & & & & & & & &\vline & & & 0 & \delta_{k+1}   & & & & & &\\
 & & & \ddots& & & & & & & & \vline & & & & \ddots & \ddots& & & & &\\
 & & & & \beta_{\frac{d}{2}-2} & & & & & & & \vline & & & & & \ddots& \delta_{\frac{d}{2}-1} & & & &\\
 & & & & & \beta_{\frac{d}{2}-1} & & & & & &\vline & & &  & & &0 & 0& & & &\\
\hline
 &  & & & & &\gamma_{\frac{d}{2}} & & & & &\vline & &   & & & & & 0 & \delta_{\frac{d}{2}-1} & & & \\
 &  & & & & & & \ddots & & & & \vline & & &  & & & & &\ddots  &\ddots& & \\
  &  & & & & & & & \gamma_k&  & &\vline & & &  & & & & &  & 0 &\delta_{k-1} & \\
 &  & & &  & &  & &  &  \ddots & &\vline & & &  & & & & & & &\ddots & \ddots \\
 &  & &   & & &   & &  & & \gamma_1 &\vline & & &  & & & & & &  & & 0 \\
\hline \hline
0& & & & & & & & &  & & \vline & \gamma_1 &  & & & & & & & & & &\\
\ddots & \ddots & & & & & & & & & & \vline &  &\ddots  & & & & & & & & & &\\
& \delta_{k-1}& 0  & & & & & & & &  & \vline & &  &\gamma_k  & & & & & & & & & &\\
& & \ddots & \ddots & & & & & &  & &\vline & &  & & \ddots & & & & & & & &\\
& &  & \delta_{\frac{d}{2}-1}& 0 & & & & & & & \vline & & &  & &   \gamma_{\frac{d}{2}} & & & & & \\
\hline & &  &  &0 &0 & & & &  & & \vline & &  & & &  &   \beta_{\frac{d}{2}-1} & & & & & &\\
 & &  &  & &\delta_{\frac{d}{2}-1} & 0 & & & & & \vline & &  & & &  &   & \beta_{\frac{d}{2}-2} & & & & & &\\
& &  & & & &\ddots & \ddots & & &  & \vline & &  & & &  & & &  \ddots & & & & &\\
& &  & & & & & \delta_{k+1} &0 & & & \vline & &  & & &  & &  & &\beta_k & & & &\\
& &  &  & & & &  &\ddots &\ddots &  &\vline & &  & & &  &  & & & &\ddots & & &\\
\delta_{\frac{d}{2}}& &  &  & & &  & & &  \delta_1&  0& \vline & &  & & & &  &  & & & &\beta_0 & &\\
\end{pmatrix}
\normalsize
\end{align*}
is strictly dd and satisfies (\ref{eq:DSOS.Conv.int}) with $w_{2,d-1}(x,y)$ ordered as $$\begin{pmatrix}y_1x_1^{d-1}, y_1 x_1^{d-2}x_2, \ldots, y_1x_1x_2^{d-2}, y_1x_2^{d-1},y_2x_1^{d-1}, y_2 x_1^{d-2}x_2, \ldots, y_2 x_1x_2^{d-2}, y_2x_2^{d-1} \end{pmatrix}^T.$$
To show (\ref{eq:DSOS.Conv.int}), one can derive the Hessian of $p$, expand both sides of the equation, and verify equality. To ensure that the matrix is strictly dd, we want all diagonal coefficients to be strictly greater than the sum of the elements on the row. This translates to the following inequalities
\begin{align*}
\beta_0 &>\delta_1+\delta_{\frac{d}{2}}\\
\beta_k &>\delta_{k+1}, \forall k=1,\ldots,\frac{d}{2}-2\\
\beta_{\frac{d}{2}-1}&>0, \gamma_1>0 \\
\gamma_{k+1} &>\delta_{k}, \forall k=1,\ldots, \frac{d}{2}-1. \\
\end{align*}
Replacing the expressions of $\beta_k,\gamma_k$ and $\delta_k$ in the previous inequalities using (\ref{eq:def.beta.gamma.delta}) and the values of $a_k$ given in (\ref{eq:ak.even}), one can easily check that these inequalities are satisfied.

We now consider the case where $d$ is not divisable by 2 and take
\scalefont{0.60}
\begin{align*}
p(x_1,x_2)\mathrel{\mathop{:}}=a_0x_1^{2d}+a_1x_1^{2d-2}x_2^2+\ldots+a_{(d-1)/2}x_1^{d+1}x_2^{d-1}+a_{(d-1)/2}x_1^{d-1}x_2^{d+1}
+\ldots+a_1x_1^2x_2^{2d-2}+a_0x_2^{2d},
\end{align*}
\normalsize
with the sequence $\{a_k\}_{k=0,\ldots,\frac{d-1}{2}}$ defined as follows
\begin{equation}\label{eq:ak.odd}
\begin{aligned}
a_1&=1\\
a_{k+1}&=\left( \frac{2d-2k}{2k+2}\right)a_k,~ k=1,\ldots, \frac{d-3}{2}\\
a_0&=1+\frac{2(2d-2)}{2d(2d-1)}.
\end{aligned}
\end{equation}
Again, we want to show existence of a strictly dd matrix $Q$ that satisfies (\ref{eq:DSOS.Conv.int}). Without changing the definitions of the sequences $\{\beta_k\}_{k=1,\ldots,\frac{d-3}{2}}$,$\{\gamma_k\}_{k=1,\ldots,\frac{d-1}{2}}$ and $\{\delta_k\}_{k=1,\ldots,\frac{d-1}{2}}$, we claim this time that the matrix $Q$ defined as
\begin{align*}
\scalefont{0.65}
\begin{pmatrix}
\beta_0 & & & & & &  &  & &  & \vline & 0 & \delta_1 & & & & & & & &\textbf{0}\\
& \ddots & & & & &  & & & & \vline &  & \ddots & \ddots & &  & & & & & &\\
 &  & \beta_{k} & & & & & & & &\vline & & & 0 & \delta_{k+1}   & & & & & &\\
 & & & \ddots& & & & & & & \vline & & & &\ddots & \ddots& & & & &\\
 & & & &\beta_{\frac{{d-3}}{2}} & & & & & &\vline & &  & & &0 & \delta_{\frac{{d-1}}{{2}}}& & & &\\
\hline
 &  & & & &\gamma_{\frac{{d-1}}{2}} & & & & &\vline & &   & & & & 0 & \delta_{\frac{{d-1}}{2}-1} & & & \\
 &  & & & & & \ddots & & & & \vline & & &  & & &  &\ddots  &\ddots& & \\
  &  & & & & & & \gamma_k&  & &\vline & & &  & & & &  & 0 &\delta_{k-1} & \\
 &  & & &  & & &  &  \ddots & &\vline & & &  & & & & & &\ddots & \ddots \\
 &  & &   & & &   &  & & \gamma_1 &\vline & & &  & & & & &  & & 0 \\
\hline \hline
0& & & & & & & & &  & \vline & \gamma_1 &  & & & & & & & & & &\\
\ddots & \ddots & & & & & & & &  & \vline &  &\ddots  & & & & & & & & & &\\
& \delta_{k-1}& 0  & & & & & & &  & \vline & &  &\gamma_k  & & & & & & & & & &\\
& & \ddots & \ddots & & & & & &  & \vline & &  & & \ddots & & & & & & & &\\
& &  & \delta_{\frac{d-1}{2}-1}& 0 & & & & & & \vline & & &  & &   \gamma_{\frac{d-1}{2}} & & & & & \\
\hline & &  &  &  \delta_{\frac{{d-1}}{\textbf{2}}}&0 & & & &  & \vline & &  & & &  &   \beta_{\frac{{d-3}}{2}} & & & & & &\\
& &  &  &&\ddots & \ddots & & &  & \vline & &  & & &  &  &\ddots & & & & &\\
& &  &  & & & \delta_{k+1} &0 & &  & \vline & &  & & &  &  & &\beta_k & & & &\\
& &  &  & & &  &\ddots &\ddots &  & \vline & &  & & &  &  & & &\ddots & & &\\
\textbf{0}& &  &  & & &  & &  \delta_1&  0& \vline & &  & & &  &  & & & &\beta_0 & &\\
\end{pmatrix}
\normalsize
\end{align*}
satisfies (\ref{eq:DSOS.Conv.int}) and is strictly dd. Showing (\ref{eq:DSOS.Conv.int}) amounts to deriving the Hessian of $p$ and checking that the equality is verified. To ensure that $Q$ is strictly dd, the inequalities that now must be verified are 
\begin{align*}
\beta_k &>\delta_{k+1}, \forall k=0,\ldots,\frac{d-1}{2}-1\\
\gamma_{k} &>\delta_{k-1}, \forall k=2,\ldots, \frac{d-1}{2} \\
\gamma_{1}&>0.
\end{align*}
These inequalities can all be shown to hold using (\ref{eq:ak.odd}). \qed

\end{proof}

\begin{lemma}\label{lemma:induction}
For all $n,d,$ there exists a form $p_{n,2d} \in \mathcal{H}_{n,2d}$ such that  
\begin{align*}
y^TH_{p_{n,2d}}(x)y=w^T_{n,d-1}(x,y)Q_{p_{n,2d}}w_{n,d-1}(x,y) 
\end{align*}
 and $Q_{p_{n,2d}}$ is a strictly dd matrix.
\end{lemma}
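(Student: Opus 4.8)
The plan is to induct on the number of variables $n$, taking Lemma~\ref{lemma:homogn2} as the base case $n=2$. The convenient way to organize the induction is to reindex the monomial vector $w_{n,d-1}(x,y)$ by pairs $(k,\gamma)$, where $k\in\{1,\dots,n\}$ and $\gamma$ ranges over exponent vectors with $|\gamma|=d-1$, via $w_{(k,\gamma)}=y_k x^\gamma$. Under this indexing, a matrix $Q$ satisfies $y^TH_p(x)y=w_{n,d-1}^T(x,y)\,Q\,w_{n,d-1}(x,y)$ if and only if, writing $Q$ as an $n\times n$ array of blocks indexed by $(k,l)$, the block $Q^{(k,l)}$ is a Gram matrix (in the degree-$(d-1)$ monomial basis) of the entry $(H_p)_{kl}(x)$ of the Hessian. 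Hence the diagonal Gram entries of $Q$ live only in the diagonal blocks and correspond to the coefficients of $x^{2\gamma}$ in $\partial^2 p/\partial x_k^2$, while all off-diagonal mass comes either from cross terms within a diagonal block or from the off-diagonal blocks built out of the mixed partials $\partial^2 p/\partial x_k\partial x_l$. Strict diagonal dominance of $Q$ is then a finite list of explicit inequalities on the coefficients of $p$, exactly in the spirit of the $\beta_k,\gamma_k,\delta_k$ inequalities of the base case.

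For the inductive step I would introduce the new variable $x_n$ by setting
\[
p_{n,2d}(x)=p_{n-1,2d}(x_1,\dots,x_{n-1})+\sum_{i=1}^{n-1}p_{2,2d}(x_i,x_n)+\mu\, m(x),
\]
where $p_{n-1,2d}$ and $p_{2,2d}$ are the forms supplied by the inductive hypothesis and by Lemma~\ref{lemma:homogn2}, $m(x)$ is the sum of all even monomials $x^{2\beta}$ of degree $2d$ whose support contains $x_n$ together with at least two of $x_1,\dots,x_{n-1}$, and $\mu>0$ is a parameter. The point of this decomposition is that it fills in \emph{every} required diagonal entry of $Q$: by the inductive hypothesis $p_{n-1,2d}$ already contains, with positive coefficient, all even monomials supported in $x_1,\dots,x_{n-1}$ (this is forced by strict diagonal dominance of its own Gram matrix); the bridge terms $p_{2,2d}(x_i,x_n)$ supply all even monomials supported on a pair $\{x_i,x_n\}$; and $m(x)$ supplies precisely the even monomials involving $x_n$ and two or more further variables, which are the only ones missing. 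Consequently $\partial^2 p_{n,2d}/\partial x_k^2$ contains $x^{2\gamma}$ for every admissible $(k,\gamma)$, so the diagonal of $Q$ can be made strictly positive on all of $w_{n,d-1}$.

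The assembly of $Q$ then proceeds block by block: the old blocks $Q^{(k,l)}$ with $k,l\le n-1$ are taken from the strictly dd matrix furnished by the inductive hypothesis (augmented on the new monomials involving $x_n$), the blocks coupling an index $k\le n-1$ with the new index $n$ are read off from the bivariate Gram matrices of Lemma~\ref{lemma:homogn2} plus the contribution of $\mu\,m$, and the block $Q^{(n,n)}$ comes from $\partial^2 p_{n,2d}/\partial x_n^2$. The main obstacle, and the real content of the lemma, is the strict diagonal dominance verification once $x_n$ is present: the mixed partials $\partial^2(\mu m)/\partial x_k\partial x_l$ and the several distinct factorizations $x^{\gamma}x^{\gamma'}=x^{\gamma+\gamma'}$ of each mixed monomial create off-diagonal Gram entries that couple rows across different $y$-blocks, and one must show that the positive diagonal still strictly dominates them. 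I expect this to reduce, as in the base case, to choosing the coefficients of $m$ (and $\mu$) and distributing each mixed monomial's coefficient among its admissible factorizations so that the resulting row sums obey dominance inequalities; the delicate part is that enlarging $\mu$ inflates both diagonal and off-diagonal contributions of $m$ simultaneously, so the dominance must be secured by the \emph{structure} of the chosen coefficients rather than by a crude scaling argument, mirroring the explicit $\beta_k>\delta_{k+1}$, $\gamma_{k+1}>\delta_k$ bookkeeping already carried out for $n=2$.
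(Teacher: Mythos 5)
Your induction skeleton (base case Lemma~\ref{lemma:homogn2}, reindexing the Gram matrix by pairs $(k,\gamma)$, covering every diagonal entry) is the same as the paper's, but your inductive construction is different, and the difference lands you exactly on the difficulty that the paper's construction is designed to avoid. The strict diagonal dominance verification is not a routine bookkeeping step that can be deferred: it is the entire content of the lemma, and for your choice $p_{n,2d}=p_{n-1,2d}+\sum_i p_{2,2d}(x_i,x_n)+\mu\,m$ it is genuinely in doubt. The rows $(k,\gamma)$ whose index set $\{k\}\cup\mathrm{supp}(\gamma)$ contains $x_n$ and at least two other variables get their diagonal entries only from $\mu\,m$, and, as you note, they also receive off-diagonal entries from the mixed partials of $\mu\,m$, both scaling linearly in $\mu$; so no choice of $\mu$ helps, and dominance in those rows is a structural property of $m$ that you never establish. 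Moreover, one cannot hope to evacuate all of those off-diagonal entries into the rows controlled by $p_{n-1,2d}$ and the bridges: take $2d=10$, $n\geq 5$, and the monomial $x_1^2x_2^2x_3^2x_4^2x_n^2\in m$. Its mixed partial in $x_1,x_2$ produces the cross term $y_1y_2\,x_1x_2x_3^2x_4^2x_n^2$, and every factorization $(y_1x^{\gamma'})(y_2x^{\gamma''})$ with $|\gamma'|=|\gamma''|=4$ forces at least one of the two rows into your $m$-class: a row $(1,\gamma')$ is ``old'' only if $\gamma'_n=0$ and ``bridge'' only if $\mathrm{supp}(\gamma')\subseteq\{1,n\}$, and checking the three cases $\gamma'_n\in\{0,1,2\}$ shows no admissible split of $x_n^2$ and $x_3^2x_4^2$ keeps both rows old-or-bridge (e.g., $\gamma''_n=2$ with $\mathrm{supp}(\gamma'')\subseteq\{2,n\}$ would need $x_2^2x_n^2$, but only $x_2^1$ is available). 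So dominance in the $m$-class rows would require exactly the kind of delicate, unequal-coefficient engineering carried out for $n=2$ (the recursively tuned $a_k$'s), and your proposal, with all coefficients of $m$ equal to $\mu$, contains no argument that this can be done.

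The paper's inductive step sidesteps this entirely through a different construction: $p_{n+1,2d}=q+\alpha v$, where $q$ is the sum of $p_{n,2d}$ over \emph{all} selections of $n$ of the $n+1$ variables (not just the $(n-1)$-variable block plus bivariate bridges), and $v$ is the sum of the even degree-$2d$ monomials with \emph{full} support (not support of size at least three). Two structural facts then make a crude small-$\alpha$ scaling argument sufficient, with no coefficient tuning: (i) because $q$ runs over all $n$-element selections, its Gram matrix is strictly dd on the whole block of monomials involving at most $n$ of the $n+1$ indices, each of its rows being a sum of rows of strictly dd matrices from the induction hypothesis; (ii) because every monomial of $v$ has full support, every second partial of $v$ is a perfect square whose Gram entry sits on the diagonal of a ``new'' row (one involving all $n+1$ indices), every new row receives such a positive diagonal entry, and every mixed partial of $v$ can be factored into two monomials each missing a variable, so all off-diagonal Gram entries of $v$ can be placed inside the block that $q$ already dominates with slack. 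Hence the new rows are purely diagonal, the only pollution occurs where there is $\alpha$-independent slack, and taking $\alpha$ small finishes the proof. (A smaller flaw: your parenthetical claim that strict diagonal dominance of the Gram matrix of $p_{n-1,2d}$ ``forces'' all even monomials to appear with positive coefficient is unjustified in general---absence of $x^{2\gamma+2e_k}$ only forces the diagonal entry $(k,\gamma)$ plus twice the sum of the entries $Q_{(k,\gamma'),(k,\gamma'')}$ over factorizations $\gamma'+\gamma''=2\gamma$ to vanish, which strict dominance does not preclude unless $2\gamma$ factors uniquely; what saves the statement is that the explicit constructions themselves contain all even monomials.)
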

\begin{proof}
We proceed by induction on $n$ with fixed and arbitrary $d$. The property is verified for $n=2$ by Lemma \ref{lemma:homogn2}. 
Suppose that there exists a form $p_{n,2d} \in \mathcal{H}_{n,2d}$ such that 
\begin{align}\label{eq:induction.hyp}
y^TH_{p_{n,2d}}y=w^T_{n,d-1}(x,y)Q_{p_{n,2d}}w_{n,d-1}(x,y),
\end{align}
for some strictly dd matrix $Q_{p_{n,2d}}.$  We now show that 
$$p_{n+1,2d}\mathrel{\mathop{:}}=q+\alpha v $$
with
\begin{equation} \label{eq:defqv}
\begin{aligned}
q&\mathrel{\mathop{:}}=\sum_{\{i_1,\ldots,i_n\} \in \{1,\ldots,n+1\}^n} p_{n,2d}(x_{i_1},\ldots,x_{i_n})\\
v &\mathrel{\mathop{:}}=\sum_{2i_1+\ldots 2i_{n+1}=2d, i_1,\ldots,i_{n+1}>0} x_1^{2i_1}x_2^{2i_2}\ldots x_{n+1}^{2i_{n+1}},
\end{aligned}
\end{equation}
and $\alpha>0$ small enough, verifies 
\begin{align} \label{eq:induction.result}
y^TH_{p_{n+1,2d}}y=w^T_{n+1,d-1}(x,y)Q_{p_{n+1,2d}}w_{n+1,d-1}(x,y),
\end{align}
for some strictly dd matrix $Q_{p_{n+1,2d}}$. Equation (\ref{eq:induction.result}) will actually be proved using an equivalent formulation that we describe now. {\gh Recall that $$w_{n+1,d-1}(x,y)=y\cdot z_{n+1,d-1},$$ where $z_{n+1,d-1}$ is the standard vector of monomials in $x=(x_1,\ldots,x_{n+1})$ of degree exactly $d-1$. Let $\hat{w}_n$ be a vector containing all monomials from $w_{n+1,d-1}$  that include up to $n$ variables in $x$ and $\hat{w}_{n+1}$ be a vector containing all monomials from $w_{n+1,d-1}$ with exactly $n+1$ variables in $x$.} Obviously, $w_{n+1,d-1}$ is equal to $$\hat{w}\mathrel{\mathop{:}}=\begin{pmatrix} \hat{w}_n \\ \hat{w}_{n+1} \end{pmatrix}$$ up to a permutation of its entries. If we show that there exists a strictly dd matrix $\hat{Q}$ such that 
\begin{align} \label{eq:induction.permutation}
y^TH_{p_{n+1,2d}}(x)y=\hat{w}^T(x,y)\hat{Q}\hat{w}(x,y)
\end{align}
{\gh then one can easily construct a strictly dd matrix $Q_{p_{n+1,2d}}$ such that (\ref{eq:induction.result}) will hold by simply permuting the rows of $\hat{Q}$ appropriately.}

{\gh We now show the existence of such a $\hat{Q}$. To do this, we claim and prove the following:}
\begin{itemize}
\item \emph{Claim 1:} there exists a strictly dd matrix $\hat{Q}_q$ such that
\begin{align}\label{eq:permut.q}
y^TH_{q}(x)y=\begin{pmatrix} \hat{w}_n \\ \hat{w}_{n+1} \end{pmatrix}^T \begin{pmatrix} \hat{Q}_q & 0 \\ 0 & 0 \end{pmatrix} \begin{pmatrix} \hat{w}_n \\ \hat{w}_{n+1} \end{pmatrix}.
\end{align} 
\item \emph{Claim 2:} there exist a symmetric matrix $\hat{Q}_{v}$,  and $q_1,\ldots,q_m>0$ (where $m$ is the length of $\hat{w}_{n+1}$) such that
\begin{align}\label{eq:permut.gam}
y^TH_{v}(x)y=\begin{pmatrix} \hat{w}_n \\ \hat{w}_{n+1} \end{pmatrix}^T \begin{pmatrix} \hat{Q}_{v} & 0 \\ 0 & \text{diag}(q_1,\ldots,q_m)\end{pmatrix} \begin{pmatrix} \hat{w}_n \\ \hat{w}_{n+1} \end{pmatrix}.
\end{align}
\end{itemize}

{\gh Using these two claims and the fact that $p_{n+1,2d}=q+\alpha v$, we get that $$y^TH_{p_{n+1,2d}}(x)y=y^TH_q(x)y+\alpha y^T H_v(x)y=\hat{w}^T(x,y)\hat{Q}\hat{w}(x,y)$$ where $$\hat{Q}=\begin{pmatrix} \hat{Q}_q+\alpha \hat{Q}_v & 0 \\ 0 & \alpha \text{ diag}(q_1,\ldots,q_m) \end{pmatrix}.$$} As $\hat{Q}_q$ is strictly dd, we can pick $\alpha>0$ small enough such that $\hat{Q}_q+\alpha \hat{Q}_v$ is strictly dd. This entails that $\hat{Q}$ is strictly dd, and  (\ref{eq:induction.permutation}) holds.

{\gh It remains to prove the two claims to be done.}\\

{\gh \emph{Proof of Claim 1:} Claim 1 concerns the polynomial $q$, defined as the sum of polynomials $p_{n,2d}(x_{i_1},\ldots,x_{i_n})$. Note from (\ref{eq:induction.hyp}) that the Hessian of each of these polynomials has a strictly dd Gram matrix in the monomial vector $w_{n,d-1}.$ However, the statement of Claim 1 involves the monomial vector $\hat{w}_n$. So, we start by linking the two monomial vectors.}
If we denote by $$M=\mathop{\cup}_{(i_1,\ldots,i_n) \in \{1,\ldots,n+1\}^n} \{\text{monomials in } w_{n,d-1}(x_{i_1},\ldots, x_{i_n},y)\},$$ then $M$ is exactly equal to $\hat{M}=\{\text{monomials in }\hat{w}_n(x,y)\}$ {\gh as the entries of both are monomials of degree 1 in $y$ and of degree $d-1$ and in $n$ variables of $x=(x_1,\ldots,x_{n+1}).$

By definition of $q$, we have that 
\scalefont{0.69}
\begin{align*}
y^TH_qy=\sum_{(i_1,\ldots,i_n) \in \{1,\ldots,n+1\}^n} w_{n,d-1}(x_{i_1}, \ldots,x_{i_n},y)^T Q_{p_{n,2d}(x_{i_1},\ldots,x_{i_n})} w_{n,d-1}(x_{i_1}, \ldots,x_{i_n},y)
\end{align*}
\normalsize

We now claim that there exists a strictly dd matrix $\hat{Q}_q$ such that $$y^TH_qy=\hat{w}_n^T \hat{Q}_q \hat{w}_n.$$ This matrix is constructed by padding the strictly dd matrices $Q_{p_{n,2d}(x_{i_1},\ldots,x_{i_n})}$ with rows of zeros and then adding them up. The sum of two rows that verify the strict diagonal dominance condition still verifies this condition. So we only need to make sure that there is no row in $\hat{Q}_q$ that is all zero. This is indeed the case because $\hat{M} \subseteq M.$}

{\gh \emph{Proof of Claim 2:}} Let $I \mathrel{\mathop{:}}=\{i_1,\ldots,i_n~|~ i_1+\ldots+i_{n+1}=d, i_1,\ldots,i_{n+1}>0\}$ and $\hat{w}_{n+1}^i$ be the $i^{th}$ element of $\hat{w}_{n+1}$. To prove (\ref{eq:permut.gam}), we need to show that
\begin{equation}\label{eq:hessian.gam.ondiag}
\begin{aligned}
y^TH_{v}(x)y=\sum_{i_1,\ldots,i_n \in I}\sum_{k=1}^{n+1}2i_k(2i_{k}-1)x_1^{2i_1}\ldots x_k^{2i_k-2}\ldots x_{n+1}^{2i_{n+1}}y_k^2 \\
+4\sum_{i_1,\ldots,i_m \in I} \sum_{j \neq k} i_k i_j x_1^{2i_1}\ldots x_j^{2i_j-1}\ldots x_k^{2i_k-1}\ldots x_{n+1}^{2i_{n+1}}y_jy_k.
\end{aligned}
\end{equation}
can equal
\begin{align} \label{eq:expansion}
\hat{w}_n^T(x,y)\hat{Q}_v \hat{w}_n^T(x,y)+\sum_{i=1}^m q_{i}(\hat{w}_{n+1}^i)^2
\end{align}
for some symmetric matrix $\hat{Q}_{v}$ and positive scalars $q_1,\ldots, q_m$.
{\gh We first argue that all monomials contained in $y^TH_v(x)y$ appear in the expansion (\ref{eq:expansion}). This means that we do not need to use any other entry of the Gram matrix in (\ref{eq:permut.gam}). Since every monomial appearing in the first double sum of (\ref{eq:hessian.gam.ondiag}) involves only even powers of variables, it can be obtained via the diagonal entries of $Q_v$ together with the entries $q_1,\ldots,q_m.$ Moreover, since the coefficient of each monomial in this double sum is positive and since the sum runs over all possible monomials consisting of even powers in $n+1$ variables, we conclude that $q_i>0$, for $i=1,\ldots,m.$


Consider now any monomial contained in the second double sum  of (\ref{eq:hessian.gam.ondiag}). We claim that any such monomial can be obtained from off-diagonal entries in $\hat{Q}_v.$ To prove this claim, we show that it can be written as the product of two monomials $m'$ and $m''$ with $n$ or fewer variables in $x=(x_1,\ldots,x_{n+1})$. Indeed, at least two variables in the monomial must have degree less than or equal to $d-1$.} Placing one variable in $m'$ and the other variable in $m''$ and then filling up $m'$ and $m''$ with the remaining variables (in any fashion as long as the degrees at $m'$ and $m''$ equal $d-1$) yields the desired result. \qed

\end{proof}
\begin{proof}[of Theorem \ref{th:mainth}] Let $p_{n,2k} \in \mathcal{H}_{n,2k}$ be the form constructed in the proof of Lemma \ref{lemma:induction} which is in the interior of $\Sigma_D C_{n,2k}.$ Let $Q_{k}$ denote the strictly diagonally dominant matrix which was constructed to satisfy $$y^TH_{p_{n,2k}}y=w_{n,2k}^T(x,y)Q_{k}w_{n,2k}.$$ To prove Theorem \ref{th:mainth}, we take $$p \mathop{\mathrel{:}}=\sum_{k=1}^{d} p_{n,2k} \in \tilde{\mathcal{H}}_{n,2d}.$$ We have
\begin{align*}
y^TH_p(x)y&=\begin{pmatrix} w_{n,1}(x,y) \\ \vdots \\ w_{n,d-1}(x,y) \end{pmatrix}^T \begin{pmatrix} Q_1 & & \\ & \ddots &  \\ & & Q_d \end{pmatrix} \begin{pmatrix} w_{n,1}(x,y) \\ \vdots \\ w_{n,d-1}(x,y) \end{pmatrix}\\
&=\tilde{w}_{n,d-1}(x,y)^TQ\tilde{w}_{n,d-1}(x,y).
\end{align*} We observe that $Q$ is strictly dd, which shows that $p \in int(\tilde{\Sigma}_DC_{n,2d}).$ \qed
\end{proof}

\begin{remark}
If we had only been interested in showing that any polynomial in $\tilde{\mathcal{H}}_{n,2d}$ could be written as a difference of two sos-convex polynomials, this could have been easily done by showing that $p(x)=\left( \sum_i x_i^2 \right)^d \in int(\Sigma C_{n,2d})$. However, this form is not dsos-convex or sdsos-convex for all $n,d$ (e.g., for $n=3$ and $2d=8$). We have been unable to find a simpler proof for existence of sdsos-convex dcds that does not go through the proof of existence of dsos-convex dcds.
\end{remark}
\begin{remark}
If we solve problem (\ref{eq:undom.dcd}) with the convexity constraint replaced by a dsos-convexity (resp. sdsos-convexity, sos-convexity) requirement, the same arguments used in the proof of Theorem \ref{thm:undom.dcd} now imply that the optimal solution $g^*$ is not dominated by any dsos-convex (resp. sdsos-convex, sos-convex) decomposition.
\end{remark} 

\section{Numerical results}\label{sec:numerical.results}
In this section, we present a few numerical results to show how our algebraic decomposition techniques affect the convex-concave procedure. The objective function $p \in \tilde{\mathcal{H}}_{n,2d}$ in all of our experiments is generated randomly following the ensemble of \cite[Section 5.1.]{Minimize_poly_Pablo}. This means that $$p(x_1,\ldots,x_n)=\sum_{i=1}^n x_i^{2d}+g(x_1,\ldots,x_n),$$ where $g$ is a random polynomial of total degree $\leq 2d-1$ whose coefficients are random integers uniformly sampled from $[-30,30].$ An advantage of polynomials generated in this fashion is that they are bounded below and that their minimum $p^*$ is achieved over $\mathbb{R}^n.$ {\gh We have intentionally restricted ourselves to polynomials of degree equal to $4$ in our experiments as this corresponds to the smallest degree for which the problem of finding a dc decomposition of $f$ is hard, without being too computationally expensive. Experimenting with higher degrees however would be a worthwhile pursuit in future work.} The starting point of CCP was generated randomly from a zero-mean Gaussian distribution. 

One nice feature of our decomposition techniques is that all the polynomials $f_i^k, i=0,\ldots, m$ in line 4 of Algorithm \ref{alg:CCP} in the introduction are sos-convex. This allows us to solve the convex subroutine of CCP exactly via a single SDP \cite[Remark 3.4.]{Monique_Etienne_Convex}, \cite[Corollary 2.3.]{Lasserre_Jensen_inequality}:
\begin{equation} \label{eq:Lasserre.hierarchy}
\begin{aligned}
&\min \gamma \\
&\text{s.t. } f_0^k -\gamma=\sigma_0+\sum_{j=1}^m \lambda_j f_j^k\\
&\sigma_0 \text{ sos}, ~\lambda_j \geq 0, j=1,\ldots,m.
\end{aligned}
\end{equation}
The degree of $\sigma_0$ here is taken to be the maximum degree of $f_0^k,\ldots, f_m^k$. We could have also solved these subproblems using standard descent algorithms for convex optimization. However, we are not so concerned with the method used to solve this convex problem as it is the same for all experiments. All of our numerical examples were done using MATLAB, the polynomial optimization library SPOT \cite{SPOT_Megretski}, and the solver MOSEK \cite{mosek}.

\subsection{Picking a good dc decomposition for CCP} \label{subsec:numexpOneDecomp}
In this subsection, we consider the problem of minimizing a random polynomial $f_0 \in \tilde{\mathcal{H}}_{8,4}$ over a ball of radius $R$, where $R$ is a random integer in $[20,50].$ The goal is to compare the impact of the dc decomposition of the objective on the performance of CCP. To monitor this, we decompose the objective in 4 different ways and then run CCP using the resulting decompositions. These decompositions are obtained through different SDPs that are listed in Table~\ref{tab:diff.objs}.

\begin{table}[h!]
\centering
\begin{tabular}{|c|c|c|c|}
\hline
Feasibility & $\lambda_{\max}H_h(x_0)$ & $\lambda_{\max,B} H_h$ & Undominated\\
\hline
 & $\min t $ & $\min_{g,h} t$ & \\
$\min 0$ &  $\text{s.t. } f_0=g-h,$ &  $\text{s.t. } f_0=g-h,$ & $\min\frac{1}{\mathcal{A}_n}\int \mbox{Tr} H_g d\sigma$ \\
$\text{s.t. } f_0=g-h,$ & $g,h$ sos-convex & $g,h$ sos-convex &  $\text{s.t. } f_0=g-h,$ \\
$g,h$ sos-convex & $tI-H_{h}(x_0) \succeq 0 $ & $y^T(tI-H_h(x)+f_1 \tau(x))y$ sos & $g,h$ sos-convex\\
& & $y^T \tau(x) y$ \tablefootnote{Here, $\tau(x)$ is an $n \times n$ matrix where each entry is in $\tilde{\mathcal{H}}_{n,2d-4}$}  sos& \\
\hline
\end{tabular}
\caption{Different decomposition techniques using sos optimization}
\label{tab:diff.objs}
\vspace{-10pt}
\end{table}

The first SDP in Table \ref{tab:diff.objs} is simply a feasibility problem. The second SDP minimizes the largest eigenvalue of $H_h$ at the initial point $x_0$ inputed to CCP. The third minimizes the largest eigenvalue of $H_h$ over the ball $B$ of radius $R$. Indeed, let $f_1 \mathrel{\mathop{:}}=\sum_i x_i^2-R^2.$ Notice that $\tau(x) \succeq 0, \forall x$ and if $x \in B$, then $f_1(x) \leq 0$. This implies that $tI \succeq H_h(x), \forall x \in B.$ The fourth SDP searches for an undominated dcd.

Once $f_0$ has been decomposed, we start CCP. After $4$ mins of total runtime, the program is stopped and we recover the objective value of the last iteration. This procedure is repeated on 30 random instances of $f_0$ and $R$, and the average of the results is presented in Figure \ref{fig:Onedecomp}.
\begin{figure}[h!]
\centering
\includegraphics[scale=0.45]{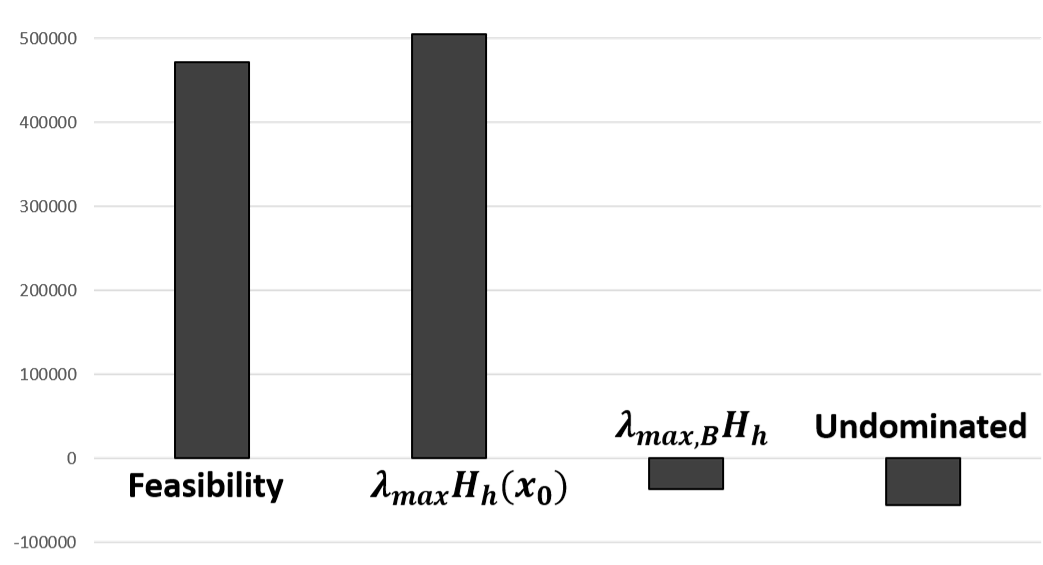}
\caption{Impact of choosing a good dcd on CCP ($n=8,2d=4$)}
\label{fig:Onedecomp}
\end{figure}
From the figure, we can see that the choice of the initial decomposition impacts the performance of CCP considerably, with the region formulation of $\lambda_{\max}$ and the undominated decomposition giving much better results than the other two. It is worth noting that all formulations have gone through roughly the same number of iterations of CCP (approx. 400). Furthermore, these results seem to confirm that it is best to pick an undominated decomposition when applying CCP.

\subsection{Scalibility of s/d/sos-convex dcds and the multiple decomposition CCP}\label{subsec:scalibility}

While solving the last optimization problem in Table \ref{tab:diff.objs} usually gives very good results, it relies on an sos-convex dc decomposition. However, this choice is only reasonable in cases where the number of variables and the degree of the polynomial that we want to decompose are low. When these become too high, obtaining an sos-convex dcd can be too time-consuming. The concepts of dsos-convexity and sdsos-convexity then become interesting alternatives to sos-convexity.
This is illustrated in Table \ref{tab:dsos}, where we have reported the time taken to solve the following decomposition problem:
\begin{equation} \label{eq:decomp.relax}
\begin{aligned}
&\min \frac{1}{\mathcal{A}_n} \int_{S^{n-1}} \mbox{Tr } H_g d\sigma\\
&\text{s.t. } f=g-h, g,h \text{ s/d/sos-convex}
\end{aligned}
\end{equation}
In this case, $f$ is a random polynomial of degree $4$ in $n$ variables. We also report the optimal value of (\ref{eq:decomp.relax}) (we know that (\ref{eq:decomp.relax}) is always guaranteed to be feasible from Theorem \ref{th:mainth}).
\begin{table}[h!] 
\begin{tabular}{|c|c|c|c|c|c|c|c|c|}
\hline
&  \multicolumn{2}{|c|}{n=6} & \multicolumn{2}{|c|}{n=10} & \multicolumn{2}{|c|}{n=14} & \multicolumn{2}{|c|}{n=18}\\
 & Time & Value & Time & Value & Time & Value & Time & Value\\
\hline dsos-convex & $<1s$ & 62090 & $<$1s & 168481 & 2.33s &136427 & 6.91s & 48457 \\
\hline sdsos-convex & $<1s$ & 53557 & 1.11 s& 132376& 3.89s &99667 & 12.16s & 32875 \\
\hline sos-convex & $<1s$ & 11602 & 44.42s &18346  &800.16s &9828 & 30hrs+ & ------\\
\hline
\end{tabular}
\caption{Time and optimal value obtained when solving (\ref{eq:decomp.relax})}
\label{tab:dsos}
\vspace{-15pt}
\end{table}
Notice that for $n=18$, it takes over 30 hours to obtain an sos-convex decomposition, whereas the run times for s/dsos-convex decompositions are still in the range of 10 seconds. This increased speed comes at a price, namely the quality of the decomposition. For example, when $n=10$, the optimal value obtained using sos-convexity is nearly 10 times lower than that of sdsos-convexity. 

Now that we have a better quantitative understanding of this tradeoff, we propose a modification to CCP that leverages the speed of s/dsos-convex dcds for large $n$.
The idea is to modify CCP in such a way that one would compute a new s/dsos-convex decomposition of the functions $f_i$ after each iteration. Instead of looking for dcds that would provide good global decompositions (such as undominated sos-convex dcds), we look for decompositions that perform well locally. From Section \ref{sec:undominated}, candidate decomposition techniques for this task can come from formulations (\ref{eq:trace.point}) and (\ref{eq:lambda.max.point}) that minimize the maximum eigenvalue of the Hessian of $h$ at a point or the trace of the Hessian of $h$ at a point. This modified version of CCP is described in detail in Algorithm \ref{alg:itCCP}. We will refer to it as \emph{multiple decomposition CCP}.

We compare the performance of CCP and multiple decomposition CCP on the problem of minimizing a polynomial $f$ of degree 4 in $n$ variables, for varying values of $n$. In Figure \ref{fig:sdsos.vs.sos}, we present the optimal value (averaged over 30 instances) obtained after 4 mins of total runtime. The ``SDSOS" columns correspond to multiple decomposition CCP (Algorithm \ref{alg:itCCP}) with sdsos-convex decompositions at each iteration. The ``SOS" columns correspond to classical CCP where the first and only decomposition is an undominated sos-convex dcd. From Figure \ref{fig:Onedecomp}, we know that this formulation performs well for small values of $n$. This is still the case here for $n=8$ and $n=10$. However, this approach performs poorly for $n=12$ as the time taken to compute the initial decomposition is too long. In contrast, multiple decomposition CCP combined with sdsos-convex decompositions does slightly worse for $n=8$ and $n=10$, but significantly better for $n=12$.

\begin{algorithm}[h]
\caption{ Multiple decomposition CCP ($\lambda_{\max}$ version)}
\label{alg:itCCP}
\begin{algorithmic}[1]
\Require $x_0,~ f_i, i=0,\ldots,m$
\State $k\leftarrow 0$
\While{stopping criterion not satisfied}
\State Decompose: $\forall i$ find $g_i^k,h_i^k$ s/d/sos-convex that min. $t$, s.t. $tI-H_{h_i^k}(x_k)$ s/dd\footnotemark and $f_i=g_i^k-h_i^k$
\State Convexify: $f_i^{k}(x)\mathrel{\mathop{:}}=g_i^k(x)-(h_i^k(x_k)+\nabla h_i^k(x_k)^T(x-x_k)),~ i=0,\ldots,m$
\State Solve convex subroutine: $\min f_0^k(x)$, s.t. $f_i^k(x) \leq 0, i=1,\ldots,m$
\State $x_{k+1}\mathrel{\mathop{:}}= \underset{f_i^{k}(x) \leq 0}{\text{argmin}} f_0^k(x)$
\State $k \leftarrow k+1$
\EndWhile
\Ensure $x_k$
\end{algorithmic}
\end{algorithm}
\footnotetext{Here dd and sdd matrices refer to notions introduced in Definition \ref{def:dd.sdd}. Note that any $t$ which makes $tI-A$ dd or sdd gives an upperbound on $\lambda_{\max}(A).$ By formulating the problem this way (instead of requiring $tI\succeq A$) we obtain an LP or SOCP instead of an SDP.}

\begin{figure}[h!]
\centering
\includegraphics[scale=0.5]{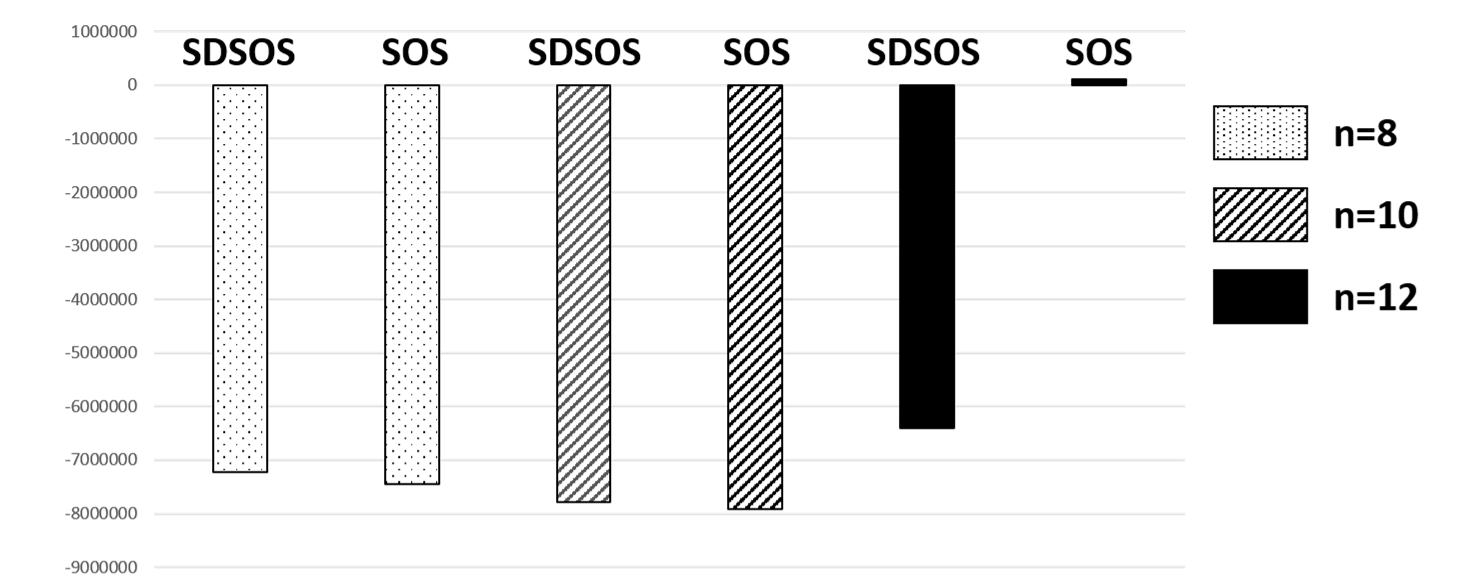}
\caption{Comparing multiple decomposition CCP using sdsos-convex decompositions against CCP with a single undominated sos-convex decomposition}
\label{fig:sdsos.vs.sos}
\vspace{-15pt}
\end{figure}

In conclusion, our overall observation is that picking a good dc decomposition noticeably affects the perfomance of CCP. While optimizing over all dc decompositions is intractable for polynomials of degree greater or equal to $4$, the algebraic notions of sos-convexity, sdsos-convexity and dsos-convexity can provide valuable relaxations. The choice among these options depends on the number of variables and the degree of the polynomial at hand. Though these classes of polynomials only constitute subsets of the set of convex polynomials, we have shown that even the smallest subset of the three contains dcds for any polynomial.

\begin{acknowledgements}
We would like to thank Pablo Parrilo for insightful discussions and Mirjam D\"{u}r for pointing out reference \cite{Bomze}.
\end{acknowledgements}

\bibliographystyle{spmpsci}      



\bibliography{pablo_amirali}


\end{document}